\newcommand{\R}{\mathbb{R}}
\newcommand{\Sph}{\mathbb{S}}
\newcommand{\Hyp}{\mathbb{H}}
\newcommand{\mc}{\mathcal}
\newcommand{\mf}{\mathfrak}
\newcommand{\lto}{\longrightarrow}
\newcommand{\lmto}{\longmapsto}
\newcommand{\nec}{\Longrightarrow}
\newcommand{\ls}{\leqslant}
\newcommand{\gs}{\geqslant}
\newcommand{\tr}{\mathrm{tr}}
\newcommand{\vecto}{\mathrm{vec}}
\newcommand{\diag}{\mathrm{diag}}
\newcommand{\Diag}{\mathrm{Diag}}
\newcommand{\somme}{\mathrm{sum}}
\newcommand{\Id}{\mathrm{Id}}
\newcommand{\Chol}{\mathrm{Chol}}
\newcommand{\Mat}{\mathrm{Mat}}
\newcommand{\Sym}{\mathrm{Sym}}
\newcommand{\Skew}{\mathrm{Skew}}
\newcommand{\GL}{\mathrm{GL}}
\newcommand{\Orth}{\mathrm{O}}
\newcommand{\SO}{\mathrm{SO}}
\newcommand{\SL}{\mathrm{SL}}
\newcommand{\LT}{\mathrm{LT}}
\newcommand{\UT}{\mathrm{UT}}
\newcommand{\Cov}{\mathrm{Cov}}
\newcommand{\Cor}{\mathrm{Cor}}
\newcommand{\cor}{\mathrm{cor}}
\newcommand{\Exp}{\mathrm{Exp}}
\newcommand{\Log}{\mathrm{Log}}
\newcommand{\dotprod}[2]{\langle #1|#2\rangle}
\newcommand{\low}{\mathrm{low}}
\newcommand{\off}{\mathrm{off}}
\newcommand{\ver}{\mathrm{ver}}
\newcommand{\hor}{\mathrm{hor}}
\newcommand{\Hol}{\mathrm{Hol}}
\newcommand{\QA}{\mathrm{QA}}
\newcommand{\ad}{\mathrm{ad}}
\renewcommand{\S}{\mathrm{S}}
\newcommand{\CAT}{\mathrm{CAT}}
\newcommand{\fun}[4]{
\left\{
\begin{array}{ccc}
#1 & \lto & #2\\ \relax
#3 & \lmto & #4\\
\end{array}
\right.
}
\crefname{hypothesis}{Hypothesis}{Hypotheses}
\title{Theoretically and computationally convenient geometries on full-rank correlation matrices \thanks{Submitted to the editors January 14th, 2022.}}
\author{Yann Thanwerdas\thanks{Université Côte d'Azur and Inria, Epione Project Team (\email{yann.thanwerdas@inria.fr}).}
\and Xavier Pennec\thanks{Université Côte d'Azur and Inria, Epione Project Team (\email{xavier.pennec@inria.fr}).}
}
\begin{document}

\maketitle

\begin{abstract}
In contrast to SPD matrices, few tools exist to perform Riemannian statistics on the open elliptope of full-rank correlation matrices. The quotient-affine metric was recently built as the quotient of the affine-invariant metric by the congruence action of positive diagonal matrices. The space of SPD matrices had always been thought of as a Riemannian homogeneous space. In contrast, we view in this work SPD matrices as a Lie group and the affine-invariant metric as a left-invariant metric. This unexpected new viewpoint allows us to generalize the construction of the quotient-affine metric and to show that the main Riemannian operations can be computed numerically. However, the uniqueness of the Riemannian logarithm or the Fréchet mean are not ensured, which is bad for computing on the elliptope. Hence, we define three new families of Riemannian metrics on full-rank correlation matrices which provide Hadamard structures, including two flat. Thus the Riemannian logarithm and the Fréchet mean are unique. We also define a nilpotent group structure for which the affine logarithm and the group mean are unique. We provide the main Riemannian/group operations of these four structures in closed form.
\end{abstract}

\begin{keywords}
SPD matrices; correlation matrices; Lie group; Lie group actions; quotient-affine metric; Lie-Cholesky metrics; poly-hyperbolic-Cholesky metrics; Euclidean-Cholesky metrics; log-Euclidean-Cholesky metrics
\end{keywords}

\begin{AMS}
15B48, 15B99, 53-08, 53B21, 15A63, 53C22, 62H20, 58D17.
\end{AMS}

\section{Introduction}

The (open) elliptope is the set of full-rank correlation matrices, it is open in the affine space of symmetric matrices with unit diagonal. Its geometry has been much less studied than the one of the cone of Symmetric Positive Definite (SPD) matrices. Nevertheless, several applications could benefit from well suited geometries on this manifold: graphical networks, brain connectomes \cite{Varoquaux10}, finance \cite{Rebonato00,Marti21} or phylogenetic trees \cite{Garba21}. A few operations have been proposed such as sampling by projecting samples on spheres \cite{Rebonato00,Kercheval08}, computing distances via the Hilbert geometry of convex sets \cite{Nielsen19} or via a recent parametrization by $\R^{n(n-1)/2}$ \cite{Archakov21}.

The open elliptope was also recently characterized as the quotient manifold of SPD matrices by the smooth, proper and free congruence action of positive diagonal matrices \cite{David19,David19-thesis}. Indeed, a correlation matrix is obtained from the covariance matrix by dividing by the standard deviation of each variable. This is equivalent to multiply the covariance matrix on left and right by the diagonal matrix of the inverse square roots of the variances. Hence all covariance matrices that are congruent up to a diagonal matrix represent the same correlation matrix. This structure allowed to quotient the well known affine-invariant metrics on SPD matrices to the so called quotient-affine metrics on full-rank correlation matrices \cite{David19,David19-thesis,Thanwerdas21-GSI}. They only differ by a scaling factor so they are often referred as \textit{the} quotient-affine metric. It offers promising perspectives since it is geodesically complete with closed form expressions for the exponential map, the Levi-Civita connection and the sectional curvature. However, it was difficult to determine the sign and potential bounds of the curvature of the quotient-affine metric. In this work, we show that the sectional curvature can take both positive and negative values, that it is bounded from below and unbounded from above. Therefore, the elliptope of full-rank correlation matrices endowed with the quotient-affine metric is neither a Hadamard space nor a $\CAT(k)$ space for any $k\in\R$.

This calls for new Riemannian metrics on the elliptope. Indeed, we could expect to find suitable metrics with non-positive or even null curvature since the elliptope is an open set of an affine space. The structure of the space is an important element of modeling because simple structures often bring good theoretical properties and better computability of the geometric operations. For example, in a Hadamard space, the Fréchet mean is unique. Recall that in a metric space $(\mc{M},d)$, a Fréchet mean of points $x_1,...,x_k\in\mc{M}$ is a point $x\in\mc{M}$ which minimizes the function $x\in\mc{M}\lmto\sum_{i=1}^kd(x,x_i)^2\gs 0$. This is the case of the affine-invariant metric on SPD matrices \cite{Skovgaard84} or the Fisher metric of beta and Dirichlet distributions \cite{LeBrigant21}. Moreover, if one has a Euclideanization of the space, i.e. a smooth diffeomorphism to a Euclidean space, then all operations become trivial. This is the case of the log-Euclidean \cite{Arsigny06} or the log-Cholesky \cite{Lin19} metrics on SPD matrices.

Another strong element of modeling is the invariance of the geometry under a given group action. Let us give some examples. First, on SPD matrices, one could require the invariance of the geometry under all affine transformations of the feature vector. Indeed in EEG, if we assume that there is an affine transformation from one brain to another at first order, then electromagnetic fields are transformed similarly since they satisfy linear equations. The affine-invariant metric significantly improved the results of classification in BCI \cite{Barachant13}. Second, one could want the analysis to be invariant from the scale of each variable. As explained previously, this corresponds to the invariance under the congruence action of positive diagonal matrices and it suggests to focus more on correlation matrices than on covariance matrices. Third, all the operations mentioned above on correlation matrices are invariant under permutations. It means that the statistical analyses are invariant under any joint permutation of the rows and columns of the correlation matrices in the dataset. It is an advantage if the way the variables (or channels) are ordered is arbitrary. It is a drawback if the order is chosen for a certain reason. For instance, in the auto-correlation matrix of a signal, the variables represent different times, which are not exchangeable. One could want a structure on correlation matrices that respects the time structure instead of being invariant under permutations. In this work, we focus on non-permutation-invariant geometric structures.

Moreover, an invariance may also lead to simple structures with good properties. For example, a Riemannian homogeneous space is geodesically complete, which is well suited for interpolation and extrapolation. The geodesics of naturally reductive homogeneous spaces are the orbits of the one-parameter subgroups so they are known in closed form. In a Riemannian symmetric space, the parallel transport is obtained in closed form by composition of two symmetries. So it is another good reason to study Lie group actions on our space.

\subsection{Overview of the results}

In this work, we define new non-permutation-invariant geometric structures on full-rank correlation matrices following two directions. Firstly, we define a non-permutation-invariant generalization of quotient-affine metrics. Our approach consists in studying the congruence action of several matrix Lie groups on SPD matrices. Our main result is a characterization of affine-invariant metrics, i.e. metrics that are invariant under the congruence action of the general linear group $\GL(n)$, by the joint invariance of a pair of subgroups. These are the group of permutation matrices $\mf{S}(n)$ and the group of lower triangular matrices with positive diagonal $\LT^+(n)$. In other words, the affine-invariant metrics are the unique ($\mf{S}(n)\times\LT^+(n)$)-invariant metrics on SPD matrices. Therefore, the family of $\LT^+(n)$-invariant metrics appears as a natural non-permutation-invariant generalization of affine-invariant metrics. Moreover, we show that such metrics are exactly the pullback metrics of left-invariant metrics on the Lie group $\LT^+(n)$ by the Cholesky map, so we call them Lie-Cholesky metrics. In particular, there exists a Lie group structure on SPD matrices (namely, the product of the Cholesky factors) such that affine-invariant metrics are left-invariant metrics on that Lie group. Finally, since the Lie group $\LT^+(n)$ contains positive diagonal matrices, Lie-Cholesky metrics descend to quotient metrics on full-rank correlation matrices by the same procedure as affine-invariant metrics. We show that several Riemannian operations are numerically computable such as the Riemannian metric, the exponential map, the logarithm map or the Riemannian distance. However, despite the nice theoretical results and the computability of the main geometric operations, quotient-Lie-Cholesky metrics don't have obvious nice geometric properties. For example, it is not clear whether the Riemannian logarithm and the Fréchet mean are unique or not.

Hence, secondly, to solve the drawbacks of quotient-affine and quotient-Lie-Cho\-lesky metrics, we propose a series of new geometries on full-rank correlation matrices for which the mean of finite samples is unique. They are built in a different way than the Lie-Cholesky metrics, the only common point being the use of the Cholesky map to convey the structures from triangular matrices to correlation matrices. Hence they are not permutation-invariant either for $n\gs 3$. We define the poly-hyperbolic-Cholesky metrics as the pullbacks of weighted products of hyperbolic spaces $\Hyp^1\times\cdots\times\Hyp^{n-1}$. The metrics of this family provide a structure of Riemannian symmetric space with non-positive bounded curvature, thus Hadamard. All operations are known in closed form. We define the Euclidean-Cholesky and the log-Euclidean-Cholesky metrics as the pullbacks by two maps derived from the Cholesky map of Euclidean structures on the vector space $\LT^0(n)$ of strictly lower triangular matrices. The metrics in these two families are flat and geodesically complete so all Riemannian operations are known in closed form, the former being less costly than the latter. Finally, we define a Lie group structure as the pullback of the Lie group (for matrix multiplication) $\LT^1(n)$ of lower triangular matrices with unit diagonal. Here, we consider the geometric structure induced by the canonical Cartan-Schouten connection rather than by a Riemannian metric. Indeed, the (group) exponential map allows to define a group mean, which is unique here because the Lie algebra of the Lie group is nilpotent \cite{Buser81}. Finally, in dimension 2, we show that on the one hand, the quotient-affine and the poly-hyperbolic-Cholesky metrics coincide, and on the other hand, the Euclidean-Cholesky and the log-Euclidean-Cholesky metrics coincide and their geodesics coincide with the group geodesics. In particular, these geodesics provide a new interpolation of the correlation coefficient, different than the one proposed in \cite{Thanwerdas21-GSI}.

In the remainder of this section, we introduce the matrix notations, the notions of covariance and correlation matrices and the Cholesky map. In Section 2, we recall the definition of the quotient-affine metrics and we prove that its sectional curvature takes both negative and positive values and is unbounded from above. In Section 3, we give a characterization of affine-invariant metrics in function of the congruence action of other groups. This allows us to introduce Lie-Cholesky metrics on SPD matrices and quotient-Lie-Cholesky metrics on full-rank correlation matrices. In Section 4, we introduce four new non-permutation-invariant structures on full-rank correlation matrices for which the mean is unique. We conclude in Section 5.

\subsection{Concepts and notations}

\subsubsection{Matrix notations}

Our main matrix space notations are given in Table \ref{tab:matrix}. In the paper, we also use the following linear maps:
\begin{enumerate}[label=$\cdot$]
    \itemsep0em
    \item $(A,B)\in\Mat(n)\times\Mat(n)\lmto A\bullet B=[A_{ij}B_{ij}]_{1\ls i,j\ls n}\in\Mat(n)$ is the Hadamard product of matrices,
    \item $\Diag:M\in\Mat(n)\lmto[\delta_{ij}M_{ij}]_{1\ls i,j\ls n}\in\Diag(n)$ selects the diagonal terms,
    \item $\off=\Id_{\Mat(n)}-\Diag:\Mat(n)\lto\ker\Diag$ selects the off-diagonal terms,
    \item $\low:M\in\Mat(n)\lmto[\delta_{i\gs j}M_{ij}]_{1\ls i,j\ls n}\in\LT(n)$ selects the lower triangular terms, \textit{including} the diagonal terms,
    \item $\low_0=\low-\Diag:\Mat(n)\lto\LT^0(n)$ selects the strictly lower triangular terms, \textit{excluding} the diagonal terms,
    \item $\low_\S=\low_0+\frac{1}{2}\Diag:\Sym(n)\lto\LT(n)$ selects the strictly lower triangular terms and half of the diagonal terms, so that when a symmetric matrix writes $M=L+L^\top\in\Sym(n)$ with $L\in\LT(n)$, then $L=\low_\S(M)$,
    \item $\somme:M\in\Mat(n)\lmto\sum_{i,j}M_{ij}=\mathds{1}^\top M\mathds{1}$ sums the terms of the matrix,
    \item $\somme:x\in\R^n\lmto\sum_ix_i=\mathds{1}^\top x$ sums the terms of the vector,
\end{enumerate}
where $\mathds{1}=(1,...,1)^\top\in\R^n$.

\begin{table}[h]
    \centering
    \begin{tabular}{|c|c|c|c|}
    \hline
    \multicolumn{2}{|c|}{Matrix vector spaces} & \multicolumn{2}{c|}{Matrix manifolds}\\
    \hline
    $\Mat(n)$ & Squared & $\GL(n)$ & General linear group  \\
    \hline
    \multirow{2}{*}{$\Skew(n)$} & \multirow{2}{*}{Skew-symmetric} & $\Orth(n)$ & Orthogonal group\\
    \cline{3-4}
    && $\SO(n)$ & Special orthogonal group\\
    \hline
    $\Sym(n)$ & Symmetric & $\Sym^+(n)$ & Sym Positive Definite cone\\
    \hline
    $\Hol(n)$ & Symmetric hollow & $\Cor^+(n)$ & Full-rank correlation elliptope\\
    \hline
    $\LT(n)$ & Lower Triangular & $\LT^+(n)$ & LT with positive diagonal\\
    \hline
    $\LT^0(n)$ & LT with null diagonal & $\LT^1(n)$ & LT with unit diagonal\\
    \hline
    $\Diag(n)$ & Diagonal & $\Diag^+(n)$ & Positive diagonal group\\
    \hline
    \end{tabular}
    \caption{Matrix space notations}
    \label{tab:matrix}
\end{table}

The possibly uncommon notations in Table \ref{tab:matrix} are:
\begin{enumerate}[label=$\cdot$]
    \itemsep0em
    \item $\Sym^+(n)=\{\Sigma\in\Sym(n)|\Sigma>0\}$ where $>$ is the Loewner order,
    \item $\Cor^+(n)=\{\Sigma\in\Sym^+(n)|\Diag(\Sigma)=I_n\}$,
    \item $\Hol(n)=\{M\in\Sym(n)|\Diag(M)=0\}$ where ``hollow" means vanishing diagonal,
    \item $\LT(n)=\{\low(M)|M\in\Mat(n)\}$,
    \item $\LT^+(n)=\{L\in\LT(n)|\Diag(L)\in\Diag^+(n)\}$,
    \item $\LT^0(n)=\{L\in\LT(n)|\Diag(L)=0\}$,
    \item $\LT^1(n)=\{L\in\LT(n)|\Diag(L)=I_n\}=I_n+\LT^0(n)$.
\end{enumerate}

We denote $\mf{S}(n)$ the permutation group of order $n$.

\subsubsection{Covariance and correlation matrices}

Given an invertible covariance matrix $\Sigma=(\Cov(X_i,X_j))_{1\ls i,j\ls n}\in\Sym^+(n)$ of a random vector $X$, the corresponding correlation matrix is defined by $C=\cor(\Sigma)=(\Cor(X_i,X_j))_{1\ls i,j\ls n}$ where:
\begin{align}
    \Cov(X_i,X_j)&=\mathbb{E}(X_iX_j)-\mathbb{E}(X_i)\mathbb{E}(X_j),\\
    \Cor(X_i,X_j)&=\frac{\Cov(X_i,X_j)}{\sqrt{\Cov(X_i,X_i)}\sqrt{\Cov(X_j,X_j)}}=\frac{\Sigma_{ij}}{\sqrt{\Sigma_{ii}}\sqrt{\Sigma_{jj}}}\\
    &=[\Diag(\Sigma)^{-1/2}\,\Sigma\,\Diag(\Sigma)^{-1/2}]_{ij},
\end{align}
with $\Diag(\Sigma)=\diag(\Sigma_{11},...,\Sigma_{nn})\in\Diag^+(n)$. Moreover, if $D\in\Diag^+(n)$, then the correlation matrix associated to $D\Sigma D$ is again $\cor(\Sigma)$. It is known as the invariance of the correlation matrix under the scaling of each component of the random vector. In other words, the surjective map $\cor:\Sigma\in\Sym^+(n)\lmto\cor(\Sigma)\in\Cor^+(n)$ is invariant under the group action $(D,\Sigma)\in\Diag^+(n)\times\Sym^+(n)\lmto D\Sigma D\in\Sym^+(n)$ and the orbit space $\Sym^+(n)/\Diag^+(n)$ can be identified with $\Cor^+(n)$. Note that, via this identification, the induced topology $\Cor^+(n)\hookrightarrow\Sym(n)$ coincides with the quotient topology of $\Sym^+(n)/\Diag^+(n)$. The action being proper \cite{David19}, the quotient manifold theorem \cite{Lee12} states that the orbit space $\Sym^+(n)/\Diag^+(n)$ has a unique smooth manifold structure such that the canonical surjection $\Sym^+(n)\lto\Sym^+(n)/\Diag^+(n)$ is a smooth submersion. Since $\cor:\Sym^+(n)\lto\Cor^+(n)$ is a smooth submersion and since $\Cor^+(n)$ is $\Sym^+(n)/\Diag^+(n)$ as a topological space, this smooth structure coincides with the induced smooth structure of $\Cor^+(n)$.

\subsubsection{The Cholesky map}

A Cholesky decomposition of a symmetric positive semi-definite matrix $\Sigma$ is a factorization of the form $\Sigma=LL^\top$ where $L\in\LT(n)$ is a lower triangular matrix. If $\Sigma$ is positive definite, then there exists a unique triangular matrix with positive diagonal $L\in\LT^+(n)$ such that $\Sigma=LL^\top$. This allows to define the Cholesky bijective map:
\begin{equation}
	\Chol:\Sigma\in\Sym^+(n)\lmto L\in\LT^+(n),
\end{equation}
whose inverse is the smooth map $\phi:L\in\LT^+(n)\lmto LL^\top\in\Sym^+(n)$. Moreover, the Cholesky map is smooth. Indeed, it is the product of two smooth maps $\Chol(\Sigma)=L(\Sigma)\sqrt{D(\Sigma)}$ where $L\equiv L(\Sigma)\in\LT^1(n)$ and $D\equiv D(\Sigma)\in\Diag^+(n)$ are recursively defined for all $(i,j)\in\{1,...,n\}^2$ with $i>j$ by:
\begin{align}
	D_{ii}&=\Sigma_{ii}-\sum_{j=1}^{i-1}{L_{ij}^2D_{jj}}>0,\\
	L_{ij}&=\frac{1}{D_{jj}}\left(\Sigma_{ij}-\sum_{k=1}^{j-1}L_{ik}L_{jk}D_{kk}\right),
\end{align}
the order of the computations being $\{D_{11}\}\to\cdots\to\{L_{i1}\to L_{i2}\to\cdots\to L_{i,i-1}\to D_{ii}\}\to\cdots\to\{L_{n1}\to\cdots\to L_{n,n-1}\to D_{nn}\}$.

We denote:
\begin{equation*}
    \mc{L}=\Chol(\Cor^+(n))=\left\{L=\begin{pmatrix}L_1\\\vdots\\L_n\end{pmatrix}\in\LT^+(n)|\forall i\in\{1,...,n\},\|L_i\|^2=L_iL_i^\top=1\right\},
\end{equation*} which is diffeomorphic to $\Cor^+(n)$ via the Cholesky map. We also define a Cholesky-based diffeomorphism between $\Cor^+(n)$ and $\LT^1(n)$:
\begin{align}
    \Theta&:C\in\Cor^+(n)\lmto\Gamma=\Diag(\Chol(C))^{-1}\Chol(C)\in\LT^1(n),\\
    \Phi&:\Gamma\in\LT^1(n)\lmto C=\Diag(\Gamma\Gamma^\top)^{-1/2}\Gamma\Gamma^\top\Diag(\Gamma\Gamma^\top)^{-1/2}\in\Cor^+(n).
\end{align}
We clearly have the relations $\Phi=\Theta^{-1}=\cor\circ\phi$.

We compute the differentials of $\phi:\LT^+(n)\lto\Sym^+(n)$ and $\Chol:\Sym^+(n)\lto\LT^+(n)$. For all $Z\in T_L\LT^+(n)\simeq\LT(n)$:
\begin{align}
	V:=d_L\phi(Z)&=ZL^\top+LZ^\top,\\
	L^{-1}VL^{-\top}&=L^{-1}Z+(L^{-1}Z)^\top,\nonumber\\
	\low_{\S}(L^{-1}VL^{-\top})&=L^{-1}Z,\nonumber\\
	Z=d_\Sigma\Chol(V)&=L\,\low_{\S}(L^{-1}VL^{-\top}). \label{eq:diff_Chol}
\end{align}
In particular, $d_{I_n}\Chol=\low_\S$.

\section{Quotient-affine metrics}

In this section, we briefly recall how to build quotient metrics to fix the notations (Section 2.1), then we recall the definition of quotient-affine metrics (Section 2.2) and finally we show that the sectional curvature takes both negative and positive curvature and is unbounded from above (Section 2.3).

\subsection{Quotient metrics}

Given smooth manifolds $\mc{M},\mc{M}'$ and a smooth submersion $\pi:\mc{M}\lto\mc{M}'$, we can define the vertical space $\mc{V}_x=\ker d_x\pi\subset T_x\mc{M}$, where $d_x\pi:T_x\mc{M}\lto T_{\pi(x)}\mc{M}'$ is the differential of the map $\pi$ at point $x\in\mc{M}$. The horizontal space $\mc{H}_x$ can be any supplementary vector space, i.e. such that $\mc{V}_x\oplus\mc{H}_x=T_x\mc{M}$. Given a horizontal distribution $x\lmto\mc{H}_x$, there exist vertical and horizontal projections $\ver_x:T_x\mc{M}\lto\mc{V}_x$ and $\hor_x:T_x\mc{M}\lto\mc{H}_x$. Moreover, the linear map $(d_x\pi)_{|\mc{H}_x}:\mc{H}_x\lto T_{\pi(x)}\mc{M}'$ is an isomorphism. Its inverse isomorphism is called the horizontal lift and denoted $\#_x:X\in T_{\pi(x)}\mc{M}'\lto X^\#_x\in\mc{H}_x$. When $\mc{M}$ is endowed with a Riemannian metric, there is a canonical choice of horizontal space which is the orthogonal $\mc{H}_x=\mc{V}_x^\perp$. In this case, the projections are orthogonal. 

Given a smooth manifold $\mc{M}$ on which a Lie group $G$ acts smoothly, properly and freely, the quotient space $\mc{M}/G$ admits a unique smooth manifold structure that turns the canonical projection $\pi:\mc{M}\lto\mc{M}/G$ into a smooth submersion \cite[Theorem 21.10]{Lee12}.
The vertical distribution is $G$-equivariant, i.e. $\mc{V}_{a\cdot x}=a\cdot\mc{V}_x$ for all $a\in G$ (where $\cdot$ is the group action on $\mc{M}$ and $T\mc{M}$). Given a $G$-invariant Riemannian metric $g$ on $\mc{M}$, the horizontal distribution is $G$-equivariant and the metric descends to a metric $g'$ on $\mc{M}/G$ defined by $g'_{\pi(x)}(X,X)=g_x(X^\#_x,X^\#_x)$.

\subsection{Definition of quotient-affine metrics}

Applying this to $\mc{M}=\Sym^+(n)$, $G=\Diag^+(n)$ and $\mc{M}/G\simeq\Cor^+(n)$, the submersion $\cor:\Sigma\in\Sym^+(n)\lmto\Diag(\Sigma)^{-1/2}\,\Sigma\,\Diag(\Sigma)^{-1/2}\in\Cor^+(n)$ \cite{David19,David19-thesis} allows to descend any $\Diag^+(n)$-invariant Riemannian metric on $\Sym^+(n)$ to a Riemannian metric on $\Cor^+(n)$ \cite{ONeill66}.

A natural example of $\Diag^+(n)$-invariant Riemannian metric on SPD matrices is provided by the affine-invariant metric defined for all $\Sigma\in\Sym^+(n)$ and $V\in T_\Sigma\Sym^+(n)\simeq\Sym(n)$ by:
\begin{equation}
    g^{\mathrm{AI}(\alpha,\beta)}_\Sigma(V,V)=\alpha\,\tr(\Sigma^{-1}V\Sigma^{-1}V)+\beta\,\tr(\Sigma^{-1}V)^2 \label{eq:affine-invariant}
\end{equation}
where $\alpha>0$ and $\beta>-\frac{\alpha}{n}$. Its sectional curvature is for $V,W\in\Sym(n)$ \cite{Skovgaard84,Thanwerdas21-LAA}:
\begin{align}
    \kappa^{\mathrm{AI}(\alpha,\beta)}_\Sigma(V,W)&=\frac{1}{4\alpha}\tr((\Sigma^{-1}V\Sigma^{-1}W-\Sigma^{-1}W\Sigma^{-1}V)^2)\in\left[-\frac{1}{2\alpha};0\right]
\end{align}

The quotient-affine metric is the quotient of the affine-invariant metric via the submersion $\cor:\Sym^+(n)\lto\Cor^+(n)$. It does not depend on $\beta$ and it writes $\alpha\,g^\QA$ with, for all $C\in\Cor^+(n)$ and $X\in T_C\Cor^+(n)\simeq\Hol(n)$:
\begin{equation}
    g^\QA_C(X,X)=\tr((C^{-1}X)^2)-2\mathds{1}^\top\Diag(C^{-1}X)(I_n+C\bullet C^{-1})^{-1}\Diag(C^{-1}X)\mathds{1}.
\end{equation}
Note that it is invariant under permutations.

The vertical/horizontal distributions/projections and the horizontal lift are for all $\Sigma\in\Sym^+(n)$, $V\in T_\Sigma\Sym^+(n)\simeq\Sym(n)$ and $X\in T_{\cor(\Sigma)}\Cor^+(n)\simeq\Hol(n)$ \cite{Thanwerdas21-GSI}:
\begin{enumerate}[label=$\cdot$]
    \itemsep0em
    \item $\mc{V}_\Sigma=\{D\Sigma+\Sigma D|D\in\Diag(n)\}$,
    \item $\mc{H}_\Sigma=\{V\in\Sym(n)|\Sigma^{-1}V+V\Sigma^{-1}\in\Hol(n)\}=\mc{S}_{\Sigma^{-1}}(\Hol(n))$,
    \item $\ver_\Sigma(V)=D\Sigma+\Sigma D\in\mc{V}_\Sigma$ with $D=\diag\left((I_n+\Sigma\bullet\Sigma^{-1})^{-1}\Diag(\Sigma^{-1}V)\mathds{1}\right)$,
    \item $\hor_\Sigma(V)=V-\ver_\Sigma(V)\in\mc{H}_\Sigma$,
    \item $X^\#_\Sigma=\hor_\Sigma(\Diag(\Sigma)^{1/2}X\Diag(\Sigma)^{1/2})\in\mc{H}_\Sigma$,
\end{enumerate}
where $\mc{S}_A(X)$ is the unique solution of the Sylvester equation $A\mc{S}_A(X)+\mc{S}_A(X)A=X$.

These operations allow to write the sectional curvature for all $C\in\Cor^+(n)$ and $X,Y\in T_C\Cor^+(n)\simeq\Hol(n)$ \cite{Thanwerdas21-GSI}:
\begin{equation}
    \kappa^\QA_C(X,Y)=\underset{\in\left[-\frac{1}{2};0\right]}{\underbrace{\kappa^{\mathrm{AI}}_C(X^\#_C,Y^\#_C)}}+\underset{\gs 0}{\underbrace{\frac{3}{8}\frac{\mu^\top(I_n+C\bullet C^{-1})^{-1}\mu}{g^\QA_C(X,X)g^\QA_C(Y,Y)-g^\QA_C(X,Y)^2}}},
\end{equation}
with $\mu\in\R^n$ defined by $\mu=[D(X,Y)-D(Y,X)]\mathds{1}\in\R^n$ and $D(X,Y)\in\Diag(n)$ defined by $D(X,Y)=\Diag([C^{-1}\Diag(X^\#_C)C,C^{-1}Y^\#_C])$, where $[A,B]=AB-BA$ is the commutator of squared matrices.

\subsection{Complement: bounds of curvature}

\begin{theorem}\label{thm:bounds_curvature}
The sectional curvature of the quotient-affine metric takes positive and negative values. It is bounded from below and unbounded from above.
\end{theorem}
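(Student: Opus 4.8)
The plan is to exploit the O'Neill-type decomposition of $\kappa^\QA_C$ recalled just above the statement, in which the sectional curvature splits as a non-positive affine-invariant term $\kappa^{\mathrm{AI}}_C(X^\#_C,Y^\#_C)\in[-\tfrac12,0]$ plus a non-negative correction term carrying the factor $\mu^\top(I_n+C\bullet C^{-1})^{-1}\mu$. Since rescaling the metric by $\alpha>0$ only divides the sectional curvature by $\alpha$, it suffices to treat $g^\QA$ itself. The lower bound is then immediate: as the correction term is $\gs 0$, we get $\kappa^\QA_C(X,Y)\gs\kappa^{\mathrm{AI}}_C(X^\#_C,Y^\#_C)\gs-\tfrac12$ for every $C$ and every plane $\Vect(X,Y)$, so the curvature is bounded from below.

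To produce negative values I would evaluate at $C=I_n$. There the formulae above give $\Diag(I_n)^{1/2}=I_n$, $(I_n+I_n\bullet I_n)^{-1}=\tfrac12 I_n$ and $\ver_{I_n}(V)=\Diag(V)$, hence $\hor_{I_n}=\off$ and $X^\#_{I_n}=\off(X)=X$ for every $X\in\Hol(n)$. Consequently $D(X,Y)=\Diag([\Diag(X),Y])=0$ because $\Diag(X)=0$, and likewise $D(Y,X)=0$, so $\mu=0$ and the correction term vanishes. Therefore
\begin{equation*}
\kappa^\QA_{I_n}(X,Y)=\kappa^{\mathrm{AI}}_{I_n}(X,Y)=\tfrac14\tr([X,Y]^2)=-\tfrac14\|[X,Y]\|^2\ls 0,
\end{equation*}
which is strictly negative as soon as $X,Y\in\Hol(n)$ do not commute. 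This happens for $n\gs 3$: writing $E_{ij}$ for the elementary matrix with a single $1$ at position $(i,j)$, the choice $X=E_{12}+E_{21}$ and $Y=E_{13}+E_{31}$ gives $[X,Y]=E_{23}-E_{32}\neq 0$ and in fact $\kappa^\QA_{I_n}(X,Y)=-\tfrac12$.

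For the positive values and the unboundedness from above, the idea is that the bounded affine-invariant term must be overwhelmed by the correction term, which (being smooth on the unit tangent bundle over the open elliptope) can only blow up near the topological boundary of the bounded elliptope, where $\det C\to 0$. Concretely, I would fix $n=3$, choose an explicit one-parameter family $C_t\in\Cor^+(3)$ tending to a rank-deficient boundary point as $t\to 0$, together with a pair of tangent vectors $X_t,Y_t\in\Hol(3)$ normalized to be $g^\QA_{C_t}$-orthonormal, so that the denominator $g^\QA(X,X)g^\QA(Y,Y)-g^\QA(X,Y)^2$ equals $1$. The curvature then reduces to $\kappa^\QA_{C_t}(X_t,Y_t)=\kappa^{\mathrm{AI}}_{C_t}(X_t^\#,Y_t^\#)+\tfrac38\,\mu_t^\top(I_3+C_t\bullet C_t^{-1})^{-1}\mu_t$, and it remains to exhibit the family so that the scalar $\mu_t^\top(I_3+C_t\bullet C_t^{-1})^{-1}\mu_t\to+\infty$. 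Since the first term stays in $[-\tfrac12,0]$, this forces $\kappa^\QA_{C_t}(X_t,Y_t)\to+\infty$, yielding simultaneously positive values (once the correction exceeds $\tfrac12$) and unboundedness from above. A short block-diagonal argument then extends the conclusion to all $n\gs 3$: replacing $C_t$ by $C_t\oplus I_{n-3}$ and keeping $X_t,Y_t$ supported on the top $3\times 3$ block, one checks that the horizontal lift, the vector $\mu$, the metric and the affine term all localize to that block, so the sectional curvature value is unchanged.

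The hard part will be exactly this last estimate. Near $\partial\Cor^+(3)$ several quantities degenerate at once — $C_t^{-1}$ and the $g^\QA$-norms blow up while $(I_3+C_t\bullet C_t^{-1})^{-1}\to 0$ — so one must select the degenerating direction and the plane $\Vect(X_t,Y_t)$ carefully and control the competing asymptotics of $\mu_t$, of $(I_3+C_t\bullet C_t^{-1})^{-1}$ and of the orthonormalization, in order to certify that the ratio genuinely diverges rather than tending to a finite limit.
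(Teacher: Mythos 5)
Your lower-bound argument and your negative-value argument are correct and essentially identical to the paper's: the O'Neill correction term is non-negative, so $\kappa^\QA\gs\kappa^{\mathrm{AI}}\gs-\tfrac12$, and at $C=I_n$ the horizontal lift is the identity on $\Hol(n)$, hence $\mu=0$ and the curvature reduces to the non-positive affine-invariant one. One slip there: the formula $\tfrac14\tr([X,Y]^2)$ is the sectional curvature only for a $g^{\mathrm{AI}}$-orthonormal pair, and $\|E_{12}+E_{21}\|=\|E_{13}+E_{31}\|=\sqrt2$ at $I_n$, so the value for your plane is $-\tfrac18$ (as stated in the paper), not $-\tfrac12$; the sign, which is all you need, is unaffected.

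The genuine gap is in the third part, which is precisely the content of the theorem beyond the two easy assertions. What you give is a plan — let $C_t$ degenerate to the boundary, normalize the plane, and show that $\tfrac38\,\mu_t^\top(I_3+C_t\bullet C_t^{-1})^{-1}\mu_t\to+\infty$ — but you never exhibit the family $(C_t,X_t,Y_t)$ nor certify the divergence, and you yourself flag this as ``the hard part.'' A priori the competing degenerations you list ($C_t^{-1}$ and the $g^\QA$-norms blowing up while $(I_3+C_t\bullet C_t^{-1})^{-1}$ collapses) could keep the ratio bounded, so positivity and unboundedness from above remain unproven. The paper closes exactly this gap by an explicit closed-form computation: it takes the equicorrelated matrices $C=(1-\rho)I_n+\rho\ones\in\Cor^+(n)$, $X=\ones-I_n$ and $Y=\mu\mathds{1}^\top+\mathds{1}\mu^\top-2\,\diag(\mu)$ with $\mathds{1}^\top\mu=0$. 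Because every matrix involved lies in the algebra generated by $I_n$ and $\ones$ together with rank-one terms in $\mu$, all ingredients ($X^\#$, $Y^\#$, $D(X,Y)$, $D(Y,X)$, the metric terms and $(I_n+C\bullet C^{-1})^{-1}$) are computable by hand, and with $\alpha=1-\rho$, $\beta=\rho$ the normalized correction term comes out as
\begin{equation*}
\frac{3n(n-2)}{8(n-1)}\,\frac{\alpha\beta^4}{\alpha+n\beta}\left(\frac{2\alpha+n\beta}{2\alpha(\alpha+n\beta)+n\beta^2}\right)^2,
\end{equation*}
which tends to $+\infty$ as $\rho\to-\tfrac{1}{n-1}$ because $\alpha+n\beta\to0$ while the remaining factors have finite nonzero limits. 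To complete your proof you must supply such a concrete family and asymptotic estimate; note also that this choice works directly for every $n\gs3$, so your block-diagonal reduction from general $n$ to $n=3$, while plausible, is an unnecessary extra step that would itself require verification.
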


\begin{proof}
First of all, $\kappa_C(X,Y)\gs\kappa^{\mathrm{AI}}_C(X,Y)\gs-\frac{1}{2}$ so the curvature is bounded from below. Second, at $C=I_n$, $X^\#=X$ and $Y^\#=Y$ so $\Diag(X^\#)=\Diag(Y^\#)=0$ and $\mu=0$. Hence, $\kappa_{I_n}(X,Y)=\kappa^{\mathrm{AI}}_{I_n}(X,Y)\ls 0$ and for example $\kappa_{I_n}(E_{ij},E_{ik})=-\frac{1}{8}<0$ with $i\ne j\ne k\ne i\in\{1,...,n\}$ \cite{Thanwerdas21-LAA}. So the curvature takes negative values.
Third, let $X=\mathds{11}^\top-I_n$ and $Y=\mu\mathds{1}^\top+\mathds{1}\mu^\top-2\,\diag(\mu)$ with $\somme(\mu)=\mathds{1}^\top\mu=0$ where $\mu\in\R^n$. Let $C=(1-\rho)I_n+\rho\mathds{11}^\top\in\Cor^+(n)$ for $\rho\in(-\frac{1}{n-1},1)$. We show in the supplementary material that $\kappa_C(X,Y)$ tends to $+\infty$ when $\rho\to-\frac{1}{n-1}$, which proves that the curvature takes positive values and it is not bounded from above.
\end{proof}

Hence, the Riemannian manifold of full-rank correlation matrices endowed with the quotient-affine metric is geodesically complete but it is not a $\mathrm{CAT}(k)$ space for any $k\in\R$. In particular, it is not a Hadamard space as one could have hoped. It may be a problem for many algorithms because the Riemannian logarithm and the Fréchet mean are not ensured to be unique.

In Section 3, we relax the invariance under permutations that might be unsuitable in some contexts. In Section 4, we define non-permutation-invariant metrics which in addition bring uniqueness of the mean.

\section{Generalization of quotient-affine metrics}

In this section, we define a family of quotient metrics which generalize quotient-affine metrics and which are not invariant by permutations. In Section 3.1, we start by giving an overview of the congruence action of several subgroups of the general linear group $\GL(n)$ on SPD matrices. In particular, we show that affine-invariant metrics are exactly ($\mf{S}(n)\times\LT^+(n)$)-invariant metrics so the family of $\LT^+(n)$-invariant metrics appears as a natural generalization of affine-invariant metrics. In Section 3.2, we show that $\LT^+(n)$-invariant metrics are exactly pullbacks by the Cholesky map of left-invariant metrics on the Lie group $\LT^+(n)$. We call them Lie-Cholesky metrics. In particular, we define a Lie group structure on SPD matrices such that the affine-invariant metric is a left-invariant metric on that Lie group. This is an unexpected result as the SPD cone endowed with an affine-invariant metric is always seen as a Riemannian homogeneous (symmetric) space. In Section 3.3, we define quotient-Lie-Cholesky metrics and we show that we can compute numerically the exponential map, the logarithm map and the Riemannian distance.

\subsection{Congruence actions of matrix Lie groups on SPD matrices}

The action of congruence of the real general linear group $\GL(n)$ on SPD matrices is:
\begin{equation}
    \fun{\GL(n)\times\Sym^+(n)}{\Sym^+(n)}{(A,\Sigma)}{A\Sigma A^\top}.
\end{equation}
In the following theorem, we focus on the subactions given by the following subgroups.
\begin{enumerate}
    \itemsep0em
    \item The group of matrices with positive determinant $\GL^+(n)$.
    \item The special linear group $\SL(n)=\{A\in\GL(n)|\det(A)=1\}\subset\GL^+(n)$, which is interesting for the invariance of covariance matrices under volume-preserving linear transformations of the feature vector.
    \item The orthogonal group $\Orth(n)=\{R\in\GL(n)|RR^\top=I_n\}$ and the special orthogonal group $\SO(n)=\Orth(n)\cap\SL(n)$, which are interesting for the invariance of covariance matrices under rotations and symmetries.
    \item The group of lower triangular matrices with positive diagonal $\LT^+(n)$, which is interesting because an $\LT^+(n)$-invariant metric on SPD matrices is a left-invariant metric on a Lie group (cf. Section 3.2).
    \item The positive diagonal group $\Diag^+(n)\subset\LT^+(n)$, which is interesting for the invariance of covariance matrices under scalings on each variable.
    \item The group of positive real numbers $\R^+$ which injects itself into the general linear group via the map $\lambda\in\R^+\lmto\lambda I_n\in\GL^+(n)$. It is interesting for the invariance of covariance matrices under global scaling.
    \item The permutation group $\mf{S}(n)$ which injects itself into the orthogonal group via the map $\sigma\in\mf{S}(n)\hookrightarrow P_\sigma\in\Orth(n)$ defined by $[P_\sigma]_{ij}=\delta_{i,\sigma(j)}$ for all $i,j\in\{1,...,n\}$. It is interesting for the invariance of covariance matrices under permutation of the axes.
\end{enumerate}

In the proof, we also use the following notations: $\R^*$ is the group of invertible scalar matrices, $\Diag^*(n)$ is the group of invertible diagonal matrices, $\LT^*(n)$ is the group of invertible lower triangular matrices, $\UT^+(n)$ is the group of upper triangular matrices with positive diagonal.

\begin{theorem}[Characterization of affine-invariant metrics]
Let $g$ be a Riemannian metric on SPD matrices. The following statements are equivalent:
\begin{enumerate}
    \itemsep0em
    \item $g$ is $\GL(n)$-invariant, \label{enum:GL}
    \item $g$ is $\GL^+(n)$-invariant, \label{enum:GL+}
    \item $g$ is $\SL(n)$-invariant and $\R^+$-invariant, \label{enum:SL/R}
    \item $g$ is $\SO(n)$-invariant and $\Diag^+(n)$-invariant, \label{enum:SO/Diag}
    \item $g$ is $\mf{S}(n)$-invariant and $\LT^+(n)$-invariant. \label{enum:Sn/LT}
\end{enumerate}
\end{theorem}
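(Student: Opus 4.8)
The plan is to reduce the whole equivalence to elementary facts about \emph{generation of subgroups}, after which a single genuinely nontrivial point remains: passing from positive determinant to arbitrary determinant. The starting observation is that
\[
\Stab(g)=\{A\in\GL(n)\mid \Sigma\lmto A\Sigma A^\top\ \text{is a $g$-isometry}\}
\]
is a subgroup of $\GL(n)$, closed since the isometry condition is closed. Hence $g$ is $H$-invariant iff $H\subseteq\Stab(g)$, $g$ is simultaneously $H_1$- and $H_2$-invariant iff $\langle H_1,H_2\rangle\subseteq\Stab(g)$, and it suffices to test invariance on any generating set. Thus each of the five statements becomes an inclusion in $\Stab(g)$ of some subgroup, and the theorem reduces to identifying the relevant generated subgroups.

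First I would establish three generation identities. (i) $\langle\SL(n),\R^+\rangle=\GL^+(n)$: any $A$ with $\det A>0$ factors as $A=(\lambda I_n)(\lambda^{-1}A)$ with $\lambda=(\det A)^{1/n}$ and $\lambda^{-1}A\in\SL(n)$. (ii) $\langle\SO(n),\Diag^+(n)\rangle=\GL^+(n)$: the singular value decomposition gives $A=U\,\Delta\,V^\top$ with $U,V\in\Orth(n)$ and $\Delta\in\Diag^+(n)$, and when $\det A>0$ one absorbs a fixed reflection $R=\diag(-1,1,\dots,1)$ into $U$ and $V$ (using $R\Delta R=\Delta$) to force $U,V\in\SO(n)$. (iii) $\langle\mf{S}(n),\LT^+(n)\rangle=\GL(n)$: conjugating $\LT^+(n)$ by the order-reversing permutation yields $\UT^+(n)$; the lower and upper transvections $I_n+tE_{ij}$ then generate $\SL(n)$, and together with $\Diag^+(n)\subset\LT^+(n)$ they generate $\GL^+(n)$, while any transposition has determinant $-1$ so the generated group, strictly larger than the index-two subgroup $\GL^+(n)$, is all of $\GL(n)$. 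These give at once item~\ref{enum:SL/R}$\Leftrightarrow$item~\ref{enum:GL+}, item~\ref{enum:SO/Diag}$\Leftrightarrow$item~\ref{enum:GL+}, and item~\ref{enum:Sn/LT}$\Leftrightarrow$item~\ref{enum:GL}, whereas item~\ref{enum:GL}$\Rightarrow$item~\ref{enum:GL+} is obvious.

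It then remains to bridge the two determinant classes, i.e. to prove item~\ref{enum:GL+}$\Rightarrow$item~\ref{enum:GL}. Here I would use that $\GL^+(n)$ already acts transitively on $\Sym^+(n)$ (write $\Sigma=\Sigma^{1/2}(\Sigma^{1/2})^\top$ with $\det\Sigma^{1/2}>0$) with isotropy $\SO(n)$ at $I_n$, while $\GL(n)$ has isotropy $\Orth(n)$. By the standard homogeneous-space correspondence, a $\GL^+(n)$-invariant metric is the same datum as an $\SO(n)$-invariant inner product $b=g_{I_n}$ on $T_{I_n}\Sym^+(n)\simeq\Sym(n)$ for the isotropy action $V\mapsto RVR^\top$, and a $\GL(n)$-invariant metric is the same datum as an $\Orth(n)$-invariant one. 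So item~\ref{enum:GL+}$\Rightarrow$item~\ref{enum:GL} reduces exactly to the claim that every $\SO(n)$-invariant inner product on $\Sym(n)$ is automatically invariant under the reflection $V\mapsto RVR^\top$, $R=\diag(-1,1,\dots,1)$, hence under all of $\Orth(n)$.

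This reflection invariance is the main obstacle, and everything before it is routine linear algebra. To settle it I would split $\Sym(n)=\R I_n\oplus\Sym_0(n)$ into scalar and trace-free parts, a decomposition stable under every orthogonal congruence. For $n\gs 3$ the module $\Sym_0(n)$ is $\SO(n)$-irreducible of real type, so by Schur the $\SO(n)$-invariant inner products on it are the multiples of $(V,W)\lmto\tr(VW)$, with no invariant coupling between $\R I_n$ and $\Sym_0(n)$; thus $b(V,W)=\alpha\,\tr(VW)+\beta\,\tr(V)\tr(W)$, the affine-invariant form. Since $\tr(RVR^\top RWR^\top)=\tr(VW)$ and $\tr(RVR^\top)=\tr(V)$, this $b$ is manifestly $\Orth(n)$-invariant, closing the loop. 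The case $n=2$ is checked by hand: $\SO(2)$ acts on the two-dimensional $\Sym_0(2)$ by (double-angle) rotations, so its invariant forms are again the multiples of $\tr(VW)$ and the same conclusion holds. I expect this determinant-sign gap — extracting reflection invariance from mere positive-determinant invariance — to be the only real difficulty.
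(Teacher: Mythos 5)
Your proof is correct and follows the same skeleton as the paper's: statement 1 trivially implies the rest, statements 3, 4 and 5 are handled by identifying the subgroup generated by each pair, and the remaining content is statement 2 $\Rightarrow$ statement 1, which both you and the paper reduce to the fact that $\SO(n)$-invariant inner products on $\Sym(n)$ are $\Orth(n)$-invariant. Where you genuinely differ is in the execution of the two key lemmas, and the comparison is instructive. For the identity $\langle\mf{S}(n),\LT^+(n)\rangle=\GL(n)$, the paper invokes the pivoted LU decomposition $A=P_\sigma LU$ and then an explicit $2\times 2$ product realizing $\diag(-1,1,\dots,1)$ from permutations and triangular factors; you instead note that lower transvections $I_n+tE_{ij}$ lie in $\LT^+(n)$, that upper ones are conjugates of lower ones by the order-reversing permutation, that transvections generate $\SL(n)$, that adjoining $\Diag^+(n)\subset\LT^+(n)$ gives $\GL^+(n)$, and that any transposition (determinant $-1$) finishes the job because $\GL^+(n)$ has index two. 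This is cleaner: it needs neither the pivoted-LU existence theorem nor the ad hoc matrix identity. Conversely, where the paper declares statement 2 $\Rightarrow$ statement 1 to be ``well known'', you actually prove it: the correspondence between invariant metrics on $\Sym^+(n)\simeq\GL^+(n)/\SO(n)\simeq\GL(n)/\Orth(n)$ and isotropy-invariant inner products at $I_n$, followed by the splitting $\Sym(n)=\R I_n\oplus\Sym_0(n)$ and Schur's lemma for the irreducible real-type $\SO(n)$-module $\Sym_0(n)$ when $n\gs 3$, with the case $n=2$ (complex type, so real-type Schur is inapplicable) checked by hand via the double-angle rotation action. This pins down the invariant products as $\alpha\,\tr(VW)+\beta\,\tr(V)\tr(W)$, which are manifestly $\Orth(n)$-invariant. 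The price of your route is representation-theoretic input (irreducibility of $\Sym_0(n)$, taken as standard) that the paper never makes explicit; the payoff is that your argument is self-contained precisely at the one step the paper leaves to the reader, and it correctly isolates the determinant-sign gap as the only point of real substance in the whole equivalence.
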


\begin{proof}
The first statement clearly implies the others. To prove \ref{enum:GL+} $\nec$ \ref{enum:GL}, we need to take a general $\GL^+(n)$-invariant metric on SPD matrices and prove that it is $\GL(n)$-invariant. It amounts to prove that $\SO(n)$-invariant inner products on symmetric matrices are $\Orth(n)$-invariant, which is well known so \ref{enum:GL+} $\nec$ \ref{enum:GL}. To prove \ref{enum:SL/R} $\nec$ \ref{enum:GL+}; \ref{enum:SO/Diag} $\nec$ \ref{enum:GL+} and \ref{enum:Sn/LT} $\nec$ \ref{enum:GL}, it suffices to prove that the pairs of groups respectively generate $\GL^+(n)$, $\GL^+(n)$ and $\GL(n)$. 

The group generated by $\SL(n)$ and $\R^+$ is $\GL^+(n)$ so \ref{enum:SL/R} $\nec$ \ref{enum:GL+}. The group generated by $\SO(n)$ and $\Diag^+(n)$ is also $\GL^+(n)$ (it is clearly included in $\GL^+(n)$ and conversely it contains $\Sym^+(n)$ by the spectral theorem and $\GL^+(n)$ by polar decomposition) so \ref{enum:SO/Diag} $\nec$ \ref{enum:GL+}.

To prove that \ref{enum:Sn/LT} $\nec$ \ref{enum:GL}, let us show that the group generated by $\mf{S}(n)$ and $\LT^+(n)$ is $\GL(n)$. First, the LU decomposition exists for any square matrix modulo a permutation. More precisely, for all $A\in\GL(n)$, there exists a permutation $\sigma\in\mf{S}(n)$, a lower triangular matrix $L\in\LT^*(n)$ and an upper triangular matrix $U\in\UT^+(n)$ with $\Diag(U)=I_n$ such that $A=P_\sigma LU$. If we permute the rows and columns of $U$ according to the permutation $\sigma_0:k\lmto n+1-k$, then $P_{\sigma_0}UP_{\sigma_0}^\top$ is lower triangular with ones on the diagonal so it is in $\LT^+(n)$. Hence the group generated by $\mf{S}(n)$ and $\LT^*(n)$ is $\GL(n)$. Since $\LT^*(n)$ is generated by $\LT^+(n)$ and $\Diag^*(n)$, it suffices to prove that $\Diag^*(n)$ is generated by $\mf{S}(n)$ and $\LT^+(n)$. Since $\Diag^*(n)$ is generated by $\Diag^+(n)$ and matrices of the form $\diag(\pm 1,...,\pm 1)$ and since $\Diag^+(n)\subset\LT^+(n)$, it suffices to prove that matrices of the form $\diag(\pm 1,...,\pm 1)$ are generated by $\mf{S}(n)$ and $\LT^+(n)$. By matrix product and permutations, it suffices to prove that $\diag(-1,1,...,1)$ is generated by $\mf{S}(n)$ and $\LT^+(n)$. The following product of matrices:
\begin{equation}
\begin{array}{ccccccc}
    &&\overset{\in\LT^+(2)}{\overbrace{\begin{pmatrix}1&0\\-1&1\end{pmatrix}}}
    &&\overset{\in\UT^+(2)}{\overbrace{\begin{pmatrix}1&1\\0&1\end{pmatrix}}}
    &&\overset{\in\LT^+(2)}{\overbrace{\begin{pmatrix}1&0\\-1&1\end{pmatrix}}}\\
    & \nearrow & \downarrow & \nearrow & \downarrow & \nearrow & \downarrow\,=\\
    \mf{S}(2)\ni\begin{pmatrix}0&1\\1&0\end{pmatrix}
    &&\begin{pmatrix}-1&1\\1&0\end{pmatrix}
    &&\begin{pmatrix}-1&0\\1&1\end{pmatrix}
    &&\begin{pmatrix}-1&0\\0&1\end{pmatrix}
\end{array}
\end{equation}
can be generalized in dimension $n$ by adding a diagonal block $I_{n-2}$. Hence, the matrix $\diag(-1,1,...,1)$ is generated by $\mf{S}(n)$ and $\LT^+(n)$ so $\LT^*(n)$ as well and finally $\GL(n)$ entirely.
\end{proof}

One interpretation of this result is that these pairs of invariance ($\SL(n)$ and $\R^+$; $\SO(n)$ and $\Diag^+(n)$; $\mf{S}(n)$ and $\LT^+(n)$) are incompatible except in the affine-invariant metrics. In other words, the family of $\LT^+(n)$-invariant metrics is a natural non-permutation-invariant extension of the family of affine-invariant metrics. In the next section, we characterize them as pullback metrics of left-invariant metrics on the Lie group $\LT^+(n)$ by the Cholesky map.

\subsection{Lie-Cholesky metrics}

\subsubsection{$\LT^+(n)$-invariant metrics are Lie-Cholesky metrics}

The manifold $\LT^+(n)$ is an open set of the vector space $\LT(n)$. It is a Lie group where the internal law is the matrix multiplication.

\begin{definition}[Lie-Cholesky metrics on $\Sym^+(n)$]
A Lie-Cholesky metric on $\Sym^+(n)$ is the pullback of a left-invariant metric on the Lie group $(\LT^+(n),\times)$ via the Cholesky diffeomorphism. It is geodesically complete.
\end{definition}

\begin{theorem}[Lie-Cholesky are $\LT^+(n)$-invariant metrics]
A Riemannian metric on $\Sym^+(n)$ is a Lie-Cholesky metric if and only if it is $\LT^+(n)$-invariant.
\end{theorem}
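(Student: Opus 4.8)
The plan is to reduce the entire statement to a single equivariance property of the Cholesky map, after which both implications follow by purely formal manipulation of pullbacks. The central observation is that $\Chol$ intertwines the congruence action of $\LT^+(n)$ on $\Sym^+(n)$ with left translation on the group $\LT^+(n)$. So the first step is to prove the key identity
\begin{equation}
    \Chol(A\Sigma A^\top)=A\,\Chol(\Sigma)\qquad\text{for all }A\in\LT^+(n),~\Sigma\in\Sym^+(n).
\end{equation}
This is immediate from the uniqueness of the Cholesky factorization: writing $\Sigma=LL^\top$ with $L=\Chol(\Sigma)\in\LT^+(n)$, one has $A\Sigma A^\top=(AL)(AL)^\top$, and since $\LT^+(n)$ is closed under matrix multiplication, $AL\in\LT^+(n)$ is the unique Cholesky factor of $A\Sigma A^\top$. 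Denoting by $c_A:\Sigma\lmto A\Sigma A^\top$ the congruence and by $\lambda_A:L\lmto AL$ the left translation on $\LT^+(n)$, this reads $\Chol\circ c_A=\lambda_A\circ\Chol$, equivalently $\phi\circ\lambda_A=c_A\circ\phi$ on the level of the inverse map $\phi=\Chol^{-1}$.

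For the backward direction, I would assume $g=\Chol^*h$ with $h$ left-invariant, i.e. $\lambda_A^*h=h$, and compute
\begin{equation}
    c_A^*g=c_A^*\Chol^*h=(\Chol\circ c_A)^*h=(\lambda_A\circ\Chol)^*h=\Chol^*\lambda_A^*h=\Chol^*h=g,
\end{equation}
so $g$ is $\LT^+(n)$-invariant. For the forward direction, I would assume $c_A^*g=g$ for all $A\in\LT^+(n)$ and set $h=\phi^*g$, a metric on $\LT^+(n)$. Then $\Chol^*h=(\phi\circ\Chol)^*g=g$ since $\phi\circ\Chol=\id$, so $g$ is the pullback of $h$; and using $\phi\circ\lambda_A=c_A\circ\phi$,
\begin{equation}
    \lambda_A^*h=\lambda_A^*\phi^*g=(\phi\circ\lambda_A)^*g=(c_A\circ\phi)^*g=\phi^*c_A^*g=\phi^*g=h,
\end{equation}
so $h$ is left-invariant and $g$ is a Lie-Cholesky metric.

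The only substantive step is the equivariance identity; everything else is formal contravariance of the pullback. Accordingly, I do not expect a genuine obstacle, only a bookkeeping subtlety to keep straight: the relevant action of $\LT^+(n)$ on $\Sym^+(n)$ must be the congruence $\Sigma\lmto A\Sigma A^\top$ inherited from the $\GL(n)$-action of Section 3.1 (so that ``$\LT^+(n)$-invariant'' means $c_A^*g=g$), and the matching invariance on the group side must be under \emph{left} translation rather than right. This left/right matching is exactly what the ordering of the factor $AL$ in the Cholesky factorization delivers, so the two notions of invariance correspond precisely.
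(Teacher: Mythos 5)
Your proof is correct. It rests on the same underlying fact as the paper's argument --- that the Cholesky map intertwines the congruence action of $\LT^+(n)$ with left translation on the group $\LT^+(n)$ --- but you use that fact in global form, whereas the paper uses it infinitesimally. You derive $\Chol\circ c_A=\lambda_A\circ\Chol$ from uniqueness of the Cholesky factorization and then obtain both implications by pure pullback functoriality, for arbitrary $A\in\LT^+(n)$. The paper instead starts from its formula $d_\Sigma\Chol(V)=L\,\low_\S(L^{-1}VL^{-\top})$ to get the relation $L^{-1}d_\Sigma\Chol(V)=d_{I_n}\Chol(L^{-1}VL^{-\top})$ --- which is exactly the differential of your identity at $A=L^{-1}$ --- and then compares both $\Chol_*g$ and $g$ at an arbitrary point against their values at the identity. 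Each route buys something. Yours is more self-contained: the paper's chain of equivalences tacitly uses that invariance need only be tested against the single group element $A=\Chol(\Sigma)^{-1}$ (comparison of every point to $I_n$), a reduction that is legitimate precisely because the congruence action of $\LT^+(n)$ on $\Sym^+(n)$ is simply transitive --- which is the content of your equivariance identity --- but this is left unsaid there, while your computation handles general $A$ directly; your argument also generalizes verbatim to any diffeomorphism intertwining a simply transitive action with left translation on a Lie group. The paper's infinitesimal formulation, in turn, is not wasted effort in context: by isolating the value of $\Chol_*g$ at $I_n$ it sets up the subsequent parametrization of Lie-Cholesky metrics by inner products at the identity (the maps $f$, or matrices in $\Sym^+(\frac{n(n+1)}{2})$), which the corollary on affine-invariant metrics and the later curvature computations rely on.
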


\begin{proof}
Let $g$ be a metric on $\Sym^+(n)$. Let $\Sigma\in\Sym^+(n)$, $V\in T_\Sigma\Sym^+(n)$, $L=\Chol(\Sigma)$ and $Z=d_\Sigma\Chol(X)$. Note that Equation (\ref{eq:diff_Chol}) rewrites $L^{-1}Z=d_{I_n}\Chol(L^{-1}VL^{-\top})$. Thus, by definition of the pushforward, we have the following equalities:
\begin{align}
    (\Chol_*g)_L(Z,Z)&=g_\Sigma(V,V)\\
    (\Chol_*g)_{I_n}(L^{-1}Z,L^{-1}Z)&=g_{I_n}(L^{-1}VL^{-\top},L^{-1}VL^{-\top})
\end{align}
Hence, $g$ is a Lie-Cholesky metric if and only if $\Chol_*g$ is left-invariant if and only if the left terms are equal if and only if the right terms are equal if and only if $g$ is invariant under the action of $\LT^+(n)$.
\end{proof}

\subsubsection{Consequences}

\begin{corollary}
The Riemannian metrics on $\Sym^+(n)$ which are $\LT^+(n)$-inva\-riant are geodesically complete.
\end{corollary}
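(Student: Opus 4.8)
The plan is to deduce this immediately from the theorem just proved, which identifies the $\LT^+(n)$-invariant metrics with the Lie-Cholesky metrics, together with the classical fact that left-invariant metrics on Lie groups are geodesically complete. Concretely, let $g$ be an $\LT^+(n)$-invariant metric on $\Sym^+(n)$. By the theorem, $g$ is a Lie-Cholesky metric, that is, $g=\Chol^*h$ is the pullback by the Cholesky map of some left-invariant metric $h$ on the Lie group $(\LT^+(n),\times)$. Since $\Chol:\Sym^+(n)\lto\LT^+(n)$ is a diffeomorphism and, by the very definition of the pullback, a Riemannian isometry from $(\Sym^+(n),g)$ onto $(\LT^+(n),h)$, and since geodesic completeness is an isometry invariant (isometries carry geodesics to geodesics while preserving the parameter interval), it suffices to prove that $(\LT^+(n),h)$ is geodesically complete.

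First I would recall that a Lie group endowed with a left-invariant metric is a Riemannian homogeneous space: the left translations $\tau_a:M\in\LT^+(n)\lmto aM\in\LT^+(n)$ are isometries of $h$ by left-invariance, and they act transitively on $\LT^+(n)$. Then I would invoke the standard fact that every Riemannian homogeneous space is geodesically complete. The mechanism is the following: by homogeneity there is a uniform $\varepsilon>0$ such that, from every point, each unit-speed geodesic is defined at least on $(-\varepsilon,\varepsilon)$; hence a geodesic defined on a maximal open interval can never accumulate at a finite endpoint, so it extends to all of $\R$. Equivalently, one concludes via the Hopf--Rinow theorem after observing that a homogeneous space is metrically complete.

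The point I want to stress is that there is essentially no obstacle here: once the characterization theorem is in hand, the corollary is a one-line consequence, and the only facts used are the isometry invariance of completeness and the completeness of homogeneous spaces, both classical. This is exactly why the definition of Lie-Cholesky metrics already records geodesic completeness; the corollary simply transports that property back to $\Sym^+(n)$ through the Cholesky isometry.
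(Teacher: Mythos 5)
Your proof is correct and follows essentially the same route as the paper: the paper treats this corollary as an immediate consequence of the characterization theorem together with the geodesic completeness already recorded in the definition of Lie-Cholesky metrics, which rests on exactly the classical facts you cite (left-invariant Riemannian metrics make the group a Riemannian homogeneous space, hence complete, and completeness transfers through the Cholesky isometry). You have simply made explicit the steps the paper leaves implicit.
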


\begin{corollary}
Permutation-invariant Lie-Cholesky metrics are affine-invari\-ant metrics.
\end{corollary}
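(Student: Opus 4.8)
The plan is to chain the two results just established. By the theorem characterizing Lie-Cholesky metrics, a Riemannian metric $g$ on $\Sym^+(n)$ is a Lie-Cholesky metric if and only if it is $\LT^+(n)$-invariant; hence every Lie-Cholesky metric is \emph{automatically} $\LT^+(n)$-invariant, and the only thing left to exploit is the extra permutation-invariance hypothesis.

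First I would make precise what ``permutation-invariant'' means for a metric on $\Sym^+(n)$. Recall that a permutation $\sigma\in\mf{S}(n)$ acts on SPD matrices by congruence through its permutation matrix $P_\sigma\in\Orth(n)$, namely $\Sigma\lmto P_\sigma\Sigma P_\sigma^\top$. Saying that $g$ is permutation-invariant is therefore exactly saying that $g$ is $\mf{S}(n)$-invariant in the sense used in the characterization theorem, via the embedding $\sigma\hookrightarrow P_\sigma$ fixed earlier. Once this identification is made, the conclusion is immediate: a permutation-invariant Lie-Cholesky metric is simultaneously $\mf{S}(n)$-invariant and $\LT^+(n)$-invariant, which is precisely statement \ref{enum:Sn/LT} of the characterization of affine-invariant metrics. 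Invoking the equivalence \ref{enum:Sn/LT} $\cns$ \ref{enum:GL} from that theorem yields that $g$ is $\GL(n)$-invariant, i.e. affine-invariant.

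Since the statement is a direct corollary of the preceding two theorems, there is no genuine obstacle to overcome; the computational heavy lifting has already been absorbed into the generating-set argument of the characterization theorem (that $\mf{S}(n)$ and $\LT^+(n)$ generate $\GL(n)$). The only point deserving a moment of attention is the bookkeeping identification of the abstract notion of permutation-invariance with the congruence $\mf{S}(n)$-invariance appearing in the characterization, which the fixed embedding $\sigma\hookrightarrow P_\sigma$ settles at once.
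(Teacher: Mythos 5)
Your proof is correct and is exactly the argument the paper intends: the corollary is stated without proof precisely because it follows immediately by combining Theorem 3.2 (Lie-Cholesky $\Leftrightarrow$ $\LT^+(n)$-invariant) with the equivalence of statements \ref{enum:Sn/LT} and \ref{enum:GL} in the characterization theorem, under the identification of permutation-invariance with congruence $\mf{S}(n)$-invariance via $\sigma\hookrightarrow P_\sigma$. Nothing is missing.
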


\begin{corollary}[Affine-invariant metrics on SPD matrices are left-invariant metrics on a Lie group]
The manifold $\Sym^+(n)$ has a structure of Lie group given by the matrix multiplication of the Cholesky factors, i.e. for all $\Sigma,\Sigma'\in\Sym^+(n)$, denoting $L=\Chol(\Sigma)$ and $L'=\Chol(\Sigma')$, the internal law is:
\begin{equation}
    \Sigma\star\Sigma'=(LL')(LL')^\top=L\Sigma'L^\top. \label{eq:law_group}
\end{equation}
The affine-invariant metrics $g_\Sigma(X,X)=\alpha\,\tr(\Sigma^{-1}X\Sigma^{-1}X)+\beta\,\tr(\Sigma^{-1}X)^2$ are left-invariant metrics for this Lie group structure. They are pullbacks of the left-invariant metrics on the Lie group $\LT^+(n)$ characterized by the following inner products at $I_n\in\LT^+(n)$, where $Z\in T_{I_n}\LT^+(n)\simeq\LT(n)$:
\begin{align}
	\dotprod{Z}{Z}_{\alpha,\beta}&=g_{I_n}(Z+Z^\top,Z+Z^\top) \nonumber\\
	&=\alpha\,\tr((Z+Z^\top)(Z+Z^\top))+\beta\,\tr(Z+Z^\top)^2 \nonumber\\
	&=2\alpha\,\tr(ZZ^\top)+2\alpha\,\tr(\Diag(Z)^2)+4\beta\,\tr(Z)^2.
\end{align}
We denote $\dotprod{\cdot}{\cdot}=\dotprod{\cdot}{\cdot}_{1/2,0}$, i.e. $\dotprod{Z}{Z'}=\tr(ZZ^\top)+\tr(\Diag(Z)^2)$ for $Z\in\LT(n)$.
\end{corollary}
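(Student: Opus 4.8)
The plan is to obtain the statement as a direct consequence of the two preceding theorems together with the fact that $\Chol$ is, by construction, a Lie group isomorphism. First I would define the law $\star$ on $\Sym^+(n)$ by transport of structure: since $\Chol:\Sym^+(n)\to\LT^+(n)$ is a diffeomorphism, setting $\Sigma\star\Sigma'=\Chol^{-1}(\Chol(\Sigma)\Chol(\Sigma'))$ endows $\Sym^+(n)$ with a Lie group structure for which $\Chol$ is a Lie group isomorphism onto $(\LT^+(n),\times)$. Writing $L=\Chol(\Sigma)$ and $L'=\Chol(\Sigma')$ and using the inverse map $\phi(M)=MM^\top$, one computes $\Sigma\star\Sigma'=\phi(LL')=LL'L'^\top L^\top=L\Sigma'L^\top$, which is precisely Equation (\ref{eq:law_group}).

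Next I would establish left-invariance. By the characterization theorem, an affine-invariant metric, being $\GL(n)$-invariant, is in particular $\LT^+(n)$-invariant; by the previous theorem it is therefore a Lie-Cholesky metric, i.e. the pullback $\Chol^*h$ of some left-invariant metric $h$ on $(\LT^+(n),\times)$. Because $\Chol$ intertwines the left translations of the two groups — by definition of $\star$ one has $\Chol\circ\lambda_\Sigma=\ell_{\Chol(\Sigma)}\circ\Chol$, where $\lambda$ and $\ell$ denote left translations in $(\Sym^+(n),\star)$ and $(\LT^+(n),\times)$ — the pullback of a left-invariant metric is again left-invariant: $\lambda_\Sigma^*(\Chol^*h)=\Chol^*(\ell_{\Chol(\Sigma)}^*h)=\Chol^*h$. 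Hence the affine-invariant metric is left-invariant for $\star$.

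It then remains to identify $h$, which is determined by its inner product at $I_n\in\LT^+(n)$. Since $\Chol(I_n)=I_n$, transferring $g_{I_n}$ amounts to using the differential of $\phi$ at the identity. Specializing $d_L\phi(Z)=ZL^\top+LZ^\top$ to $L=I_n$ gives $d_{I_n}\phi(Z)=Z+Z^\top$, and the defining property $\low_\S(Z+Z^\top)=Z$ yields $\langle Z,Z\rangle_{\alpha,\beta}=g_{I_n}(Z+Z^\top,Z+Z^\top)$. Substituting into the affine-invariant formula at $\Sigma=I_n$ and expanding the traces gives the claimed expression; the only non-obvious simplification is that for lower triangular $Z$ one has $\tr(Z^2)=\tr(\Diag(Z)^2)$ (off-diagonal products cancel because $Z_{ij}Z_{ji}\neq 0$ forces $i=j$), whence $\tr((Z+Z^\top)^2)=2\tr(ZZ^\top)+2\tr(\Diag(Z)^2)$ and $\tr(Z+Z^\top)^2=4\tr(Z)^2$.

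The computations are all routine; the genuine content lies in recognizing that the two previous theorems already do the work, so the main point to get right is the bookkeeping of directions — that pullback by the group isomorphism $\Chol$ preserves left-invariance — rather than any hard estimate.
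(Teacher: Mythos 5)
Your proposal is correct and follows essentially the same route as the paper, which states this as a corollary of the two preceding results: transport of the group structure of $(\LT^+(n),\times)$ through $\Chol$, left-invariance via the trivial inclusion $\LT^+(n)\subset\GL(n)$ combined with the Lie-Cholesky characterization theorem, and identification of the inner product at $I_n$ using $d_{I_n}\phi(Z)=Z+Z^\top$. Your trace bookkeeping (in particular $\tr(Z^2)=\tr(\Diag(Z)^2)$ for $Z\in\LT(n)$) matches the stated formula, so nothing is missing.
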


The left-invariant metrics on $\LT^+(n)$ are parametrized by the inner products at $I_n$. Hence, they can be parameterized by self-adjoint positive definite linear maps $f:\LT(n)\lto\LT(n)$ as follows, for all $Z,Z'\in\LT(n)$:
\begin{equation}
    Z\cdot Z'=\dotprod{f(Z)}{Z'}=\dotprod{Z}{f(Z')}.
\end{equation}
In other words, if $\vecto:\LT(n)\lto\R^{n(n+1)/2}$ denotes the linear map defined by $\vecto(Z)=(Z_{11},Z_{21},Z_{22},...,Z_{n,n-1},Z_{nn})^\top$ for all $Z\in\LT(n)$, then there exists an SPD matrix $A\in\Sym^+(\frac{n(n+1)}{2})$ such that $Z\cdot Z'=\vecto(Z)^\top A\,\vecto(Z')$. Thus the Lie-Cholesky metrics are parametrized by self-adjoint positive definite linear maps $f:\LT(n)\lto\LT(n)$ or SPD matrices $A\in\Sym^+(\frac{n(n+1)}{2})$. They are denoted $\mathrm{LC}(f)$ or $\mathrm{LC}(A)$.

Therefore, we can see the family of Lie-Cholesky metrics as a natural non-permu\-tation-invariant extension of affine-invariant metrics. Since they are left-invariant metrics on a Lie group, the numerical computation of the geodesics (exponential map, logarithm map) and the parallel transport is simpler and much more stable and precise than for other metrics \cite{Guigui21}. The curvature is known in closed form modulo the computation of the operator $\ad^*:\LT(n)\times\LT(n)\lto\LT(n)$ defined for all $X,Y,Z\in\LT(n)$ by $\dotprod{\ad^*(X)(Y)}{Z}=\dotprod{Y}{[X,Z]}$ \cite[Theorem 7.30]{Besse87}. The operator $\ad^*$ for Lie-Cholesky metrics is given in the following lemma.

\begin{lemma}[Operator $\ad^*$ for Lie-Cholesky metrics]
~\\Let $\mathrm{LC}(A)$ be a Lie-Cholesky metric characterized by $A\in\Sym^+(\frac{n(n+1)}{2})$. Then for $X,Y\in\LT(n)$, $\vecto(\ad^*(X)(Y))=A(I_n\otimes X^\top-X\otimes I_n)A^{-1}\vecto(Y)$.
\end{lemma}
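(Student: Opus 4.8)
The plan is to carry out the whole computation at the level of the Lie algebra of $\LT^+(n)$, namely $\LT(n)$ equipped with the matrix commutator $[X,Z]=XZ-ZX$ and with the inner product induced by $\mathrm{LC}(A)$ at the identity, $\langle U,V\rangle=\vecto(U)^\top A\,\vecto(V)$. By the definition recalled before the lemma, $\ad^*(X)$ is the adjoint of the linear endomorphism $\ad_X\colon Z\in\LT(n)\lmto[X,Z]\in\LT(n)$ with respect to this inner product. So the task splits cleanly into (i) writing $\ad_X$ as an explicit matrix in the $\vecto$-coordinates, and (ii) taking its metric adjoint, which is a one-line formula.

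For step (i) I would pass through the full column-stacking vectorization $\mathrm{vec}$ of $\Mat(n)$, where the Kronecker identity $\mathrm{vec}(PZQ)=(Q^\top\otimes P)\mathrm{vec}(Z)$ gives at once $\mathrm{vec}([X,Z])=(I_n\otimes X-X^\top\otimes I_n)\mathrm{vec}(Z)$. Because $X$ and $Z$ are lower triangular, $[X,Z]$ is again (strictly) lower triangular, so even though neither Kronecker term individually preserves lower triangularity, their difference maps $\LT(n)$ into itself; restricting to that subspace and re-expressing in the half-vectorization $\vecto$ then yields a matrix $M_X$ with $\vecto([X,Z])=M_X\,\vecto(Z)$, carrying the entries of $I_n\otimes X-X^\top\otimes I_n$ in the ordering fixed by $\vecto$.

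For step (ii) I would simply feed this into the adjoint rule. Writing the defining relation as the scalar identity $\vecto(\ad^*(X)(Y))^\top A\,\vecto(Z)=\vecto(Y)^\top A\,M_X\,\vecto(Z)$, valid for every $Z$, and using that $A$ is symmetric, one solves for $\vecto(\ad^*(X)(Y))$ as the $A$-conjugate of $M_X^\top$ applied to $\vecto(Y)$; since $(I_n\otimes X-X^\top\otimes I_n)^\top=I_n\otimes X^\top-X\otimes I_n$, this is exactly the conjugation of $I_n\otimes X^\top-X\otimes I_n$ by $A$ that the lemma displays.

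The step I expect to require the most care is the translation between $\mathrm{vec}$, where the Kronecker calculus is transparent, and the paper's half-vectorization $\vecto$ of $\LT(n)$. Concretely, one must verify that the commutator genuinely stays in the lower-triangular subspace, so that the restriction of $I_n\otimes X-X^\top\otimes I_n$ is legitimate, and then track the transpose and, above all, the side on which $A$ and $A^{-1}$ fall: the adjoint rule for a Gram matrix $A$ reads $M\mapsto A^{-1}M^\top A$, so reproducing the precise order displayed amounts to pinning down the meaning of $A$ (the metric versus its inverse) in the $\vecto$-basis together with the transpose introduced by the $\vecto$-ordering convention. Everything beyond this bookkeeping is the elementary adjoint formula and involves no genuine computation.
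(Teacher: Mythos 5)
Your proposal follows essentially the same route as the paper's own proof: interpret $\ad^*(X)$ as the adjoint of $\ad_X$ with respect to the Gram inner product $\vecto(U)^\top A\,\vecto(V)$, represent $\ad_X$ in $\vecto$-coordinates through the Kronecker identity $\vecto([X,Z])=(I_n\otimes X-X^\top\otimes I_n)\vecto(Z)$ (legitimate, as you note, because $[X,Z]$ remains lower triangular, so this is the compression of the $n^2\times n^2$ Kronecker matrix to the lower-triangular coordinate positions), and then apply the Gram-matrix adjoint rule. The paper's proof is exactly this computation, with the $A^{-1}A$ insertion trick replacing your explicit adjoint formula.

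The one point you leave hanging is precisely where you should have committed, because it exposes an inconsistency in the paper itself. Your adjoint rule $M\lmto A^{-1}M^\top A$ is the correct one for the paper's conventions: since the metric is defined by $Z\cdot Z'=\vecto(Z)^\top A\,\vecto(Z')$ (so $A$ is unambiguously the Gram matrix, not its inverse), the defining relation $\ad^*(X)(Y)\cdot Z=Y\cdot[X,Z]$ reads
\begin{align*}
\vecto(\ad^*(X)(Y))^\top A\,\vecto(Z)&=\vecto(Y)^\top A\,M_X\,\vecto(Z)\quad\text{for all }Z\in\LT(n),\\
\text{hence}\quad\vecto(\ad^*(X)(Y))&=A^{-1}M_X^\top A\,\vecto(Y)=A^{-1}\left(I_n\otimes X^\top-X\otimes I_n\right)A\,\vecto(Y).
\end{align*}
This is $A^{-1}(\cdots)A$, not the $A(\cdots)A^{-1}$ displayed in the lemma, and no choice of convention reconciles the two: they differ unless $A$ commutes with $M_X^\top$. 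In fact the final line of the paper's own proof is exactly your formula, $A^{-1}(I_n\otimes X^\top-X\otimes I_n)A\,\vecto(Y)$; it is the lemma's displayed statement that has the factors swapped. So your concluding claim in step (ii), that your result ``is exactly the conjugation \dots that the lemma displays,'' is the one inaccurate sentence in the proposal, and the suggestion that the order could be fixed by reinterpreting $A$ as the inverse metric is not available here. Your computation is right; the displayed statement is the typo.
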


\begin{proof}
In the matrix Lie group $\LT(n)$, the Lie bracket is the commutator $[X,Z]=XZ-ZX$. Therefore:
\begin{align*}
    \ad^*(X)(Y)\cdot Z&=Y\cdot[X,Z]=\vecto(Y)^\top A\,\vecto(XZ-ZX)\\
    &=\vecto(Y)^\top A(I_n\otimes X-X^\top\otimes I_n)\vecto(Z)\\
    &=[\vecto(Y)^\top A(I_n\otimes X-X^\top\otimes I_n)A^{-1}]A\,\vecto(Z)\\
    \vecto(\ad^*(X)(Y))&=A^{-1}(I_n\otimes X^\top-X\otimes I_n)A\,\vecto(Y).
\end{align*}
\end{proof}

\subsection{Quotient-Lie-Cholesky metrics}

Lie-Cholesky metrics being $\LT^+(n)$-invariant, they are in particular $\Diag^+(n)$-invariant as the affine-invariant metric. As recalled earlier, the smooth map $\cor:\Sym^+(n)\lto\Cor^+(n)$ is a $\Diag^+(n)$-invariant submersion so Lie-Cholesky metrics descend to quotient-Lie-Cholesky metrics on full-rank correlation matrices. Quotient-Lie-Cholesky metrics naturally extend quotient-affine metrics. To use them, one needs to compute the vertical and horizontal projections. After the definition of quotient-Lie-Cholesky metrics, the following lemma gives the expression of the vertical and horizontal distributions.

\begin{definition}[Quotient-Lie-Cholesky metric]
A quotient-Lie-Cholesky metric is the quotient metric of a Lie-Cholesky metric by the submersion $\cor:\Sym^+(n)\lto\Cor^+(n)$. Equivalently, it is the pushforward metric by the diffeomorphism $\Phi:L\in\LT^1(n)\lmto\Diag(LL^\top)^{-1/2}LL^\top\Diag(LL^\top)^{-1/2}\in\Cor^+(n)$ of a quotient metric on $\LT^1(n)\simeq\Diag(n)\backslash{\LT^+(n)}$ defined by quotient via the submersion $\pi:L\in\LT^+(n)\lmto\Diag(L)^{-1}L\in\LT^1(n)$.
\end{definition}

In this section, we express all the Riemannian operations of quotient-Lie-Cholesky metrics $\LT^1(n)$. It suffices to push them forward by $\Phi$ to get them on $\Cor^+(n)$.

\begin{lemma}[Vertical distribution, horizontal distribution]
The vertical and horizontal distributions associated to the quotient-Lie-Cholesky associated to $f$ are $\mc{V}_L=\Diag(n)L$ and $\mc{H}^{\mathrm{LC}(f)}_L=L\,f^{-1}(L^{-1}d_{LL^\top}\Chol(\mc{S}_{(LL^\top)^{-1}}(\Hol(n))))$.
\end{lemma}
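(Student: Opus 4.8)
The plan is to work entirely on the group side of the quotient, as set up in the definition: the relevant submersion is $\pi:L\in\LT^+(n)\lmto\Diag(L)^{-1}L\in\LT^1(n)$ and the metric is the left-invariant metric on $\LT^+(n)$ associated to $f$. First I would compute the vertical distribution $\mc{V}_L=\ker d_L\pi$. The fibers of $\pi$ are exactly the orbits of the left-multiplication action $D\cdot L=DL$ of $\Diag^+(n)$, since $\pi(DL)=\Diag(DL)^{-1}DL=\Diag(L)^{-1}L=\pi(L)$. Differentiating $t\mapsto D(t)L$ at the identity, the tangent space to the orbit through $L$ is $\{\delta L:\delta\in\Diag(n)\}=\Diag(n)L$, and since $\dim\ker d_L\pi=\frac{n(n+1)}{2}-\frac{n(n-1)}{2}=n$ matches this dimension, I would conclude $\mc{V}_L=\Diag(n)L$.

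Next I would obtain the horizontal distribution as the $g$-orthogonal complement of $\mc{V}_L$, where the left-invariant metric reads $g_L(W,W')=\dotprod{f(L^{-1}W)}{L^{-1}W'}$. Writing a tangent vector as $W=LY$ with $Y\in\LT(n)$, the condition $g_L(W,\delta L)=0$ for all $\delta\in\Diag(n)$ becomes $\dotprod{f(Y)}{L^{-1}\delta L}=0$, i.e. $f(Y)$ lies in the orthogonal complement $\mc{N}_L:=(L^{-1}\Diag(n)L)^\perp$ taken for the fixed reference inner product $\dotprod{\cdot}{\cdot}$. Hence $\mc{H}^{\mathrm{LC}(f)}_L=Lf^{-1}(\mc{N}_L)$, and it only remains to identify $\mc{N}_L$.

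The key step is to show $\mc{N}_L=L^{-1}d_{LL^\top}\Chol(\mc{S}_{(LL^\top)^{-1}}(\Hol(n)))$, and I would do this by specializing to $f=\Id$, which by the preceding corollary corresponds to the affine-invariant metric $g^{\mathrm{AI}(1/2,0)}$. The Cholesky map is an isometry from $(\Sym^+(n),g^{\mathrm{AI}(1/2,0)})$ onto $\LT^+(n)$ with the $f=\Id$ left-invariant metric, and it intertwines the two submersions: indeed $\cor=\Phi\circ\pi\circ\Chol$, which follows from $\Phi=\cor\circ\phi$ together with the $\Diag^+(n)$-invariance of $\cor$ (one computes $\Phi(\pi(\Chol(\Sigma)))=\cor(\Diag(L)^{-1}\Sigma\Diag(L)^{-1})=\cor(\Sigma)$ for $\Sigma=LL^\top$). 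Consequently $d\Chol$ maps the $\cor$-horizontal space to the $\pi$-horizontal space. In the SPD picture the affine-invariant horizontal space is $\mc{H}_\Sigma=\mc{S}_{\Sigma^{-1}}(\Hol(n))$ from Section 2.2, so the $f=\Id$ horizontal space on $\LT^+(n)$ equals $d_{LL^\top}\Chol(\mc{S}_{(LL^\top)^{-1}}(\Hol(n)))$; comparing with $\mc{H}^{\Id}_L=L\mc{N}_L$ from the previous step yields the identity for $\mc{N}_L$, and substituting into $Lf^{-1}(\mc{N}_L)$ gives the claimed formula.

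The main obstacle is precisely this third step, namely establishing cleanly that $\Chol$ intertwines the two submersions and is an isometry for the chosen pair of metrics, so that horizontal spaces genuinely correspond. A reassuring consistency check, via (\ref{eq:diff_Chol}), is that $d\Chol$ sends the SPD vertical space to the group vertical space: $d_{LL^\top}\Chol(D\Sigma+\Sigma D)=L\,\low_\S(L^{-1}DL+(L^{-1}DL)^\top)=L(L^{-1}DL)=DL\in\Diag(n)L$. Alternatively, one could verify the identity for $\mc{N}_L$ directly by checking that $\low_\S(L^{-1}VL^{-\top})$ is $\dotprod{\cdot}{\cdot}$-orthogonal to $L^{-1}\Diag(n)L$ for every $V\in\mc{S}_{(LL^\top)^{-1}}(\Hol(n))$ and matching dimensions, but the isometry argument is shorter and more transparent.
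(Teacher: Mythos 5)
Your proof is correct and follows essentially the same route as the paper's: identify the vertical space as the tangent space to the $\Diag^+(n)$-fiber, reduce the general $f$ case to $f=\Id$ via the identity $\dotprod{f(L^{-1}Z)}{L^{-1}Z'}=\dotprod{L^{-1}\bigl(Lf(L^{-1}Z)\bigr)}{L^{-1}Z'}$, and identify the $f=\Id$ horizontal space as the Cholesky pushforward of the affine-invariant horizontal space $\mc{S}_{\Sigma^{-1}}(\Hol(n))$ on SPD matrices. The only difference is one of detail: you carefully justify this pushforward step (via the intertwining relation $\cor=\Phi\circ\pi\circ\Chol$ and the isometry of $\Chol$ for the chosen pair of metrics), whereas the paper simply asserts it with a reference to Section 2.2.
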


\begin{proof}
The vertical space is the tangent space of the fiber $\Diag^+(n)L$ or the kernel of the differential of the submersion $d_L\pi(Z)=\Diag(L)^{-1}(Z-\Diag(L^{-1}Z)L)$. Hence $\mc{V}_L=\Diag(n)L$. Moreover, we know that when $f=\Id_{\LT(n)}$, the horizontal space is $\mc{H}^{\mathrm{AI}}_L=d_{LL^\top}\Chol(\mc{S}_{(LL^\top)^{-1}}(\Hol(n)))$. Indeed, it is the pushforward by the Cholesky map of the horizontal space on SPD matrices (cf. Section 2.2). In other words, for all $Z\in\mc{H}^{\mathrm{AI}}_L$, for all $Z'\in\mc{V}_L$, we have $\dotprod{L^{-1}Z}{L^{-1}Z'}=0$. Therefore, the horizontal space of the metric $\mathrm{LC}(f)$ is:
\begin{align*}
    \mc{H}^{\mathrm{LC}(f)}_L&=\{Z\in\LT(n)|\,\forall Z'\in\mc{V}_L,\dotprod{f(L^{-1}Z)}{L^{-1}Z'}=0\}\\
    &=\{Z\in\LT(n)|\,\forall Z'\in\mc{V}_L,\dotprod{L^{-1}Lf(L^{-1}Z)}{L^{-1}Z'}=0\}\\
    &=\{Z\in\LT(n)|\,Lf(L^{-1}Z)\in\mc{H}^{\mathrm{AI}}_L\}\\
    &=Lf^{-1}(L^{-1}\mc{H}^{\mathrm{AI}}_L).
\end{align*}
\end{proof}

Then, to compute the vertical and horizontal projections, it suffices to take bases of the vertical space $\mc{V}_L$ and the horizontal space $\mc{H}_L$, to orthonormalize them by Gram-Schmidt process and to project onto these orthonormal bases. Then the horizontal lift can be computed as follows, which allows to compute the metric and the exponential map.

\begin{lemma}[Horizontal lift, Riemannian metric, exponential map]
Given the horizontal projection $\hor_L:T_L\LT^+(n)\lto\mc{H}_L$ and the exponential map of the Lie-Cholesky metric $\Exp^{\mathrm{LC}}:\LT(n)\lto\LT^+(n)$, we have for all $L\in\LT^+(n)$, for all $\Gamma=\Diag(L)^{-1}L\in\LT^1(n)$, for all $\xi\in T_\Gamma\LT^1(n)\simeq\LT^0(n)$:
\begin{enumerate}[label=$\cdot$]
    \itemsep0em
    \item (Horizontal lift) $\xi^\#_L=\hor_L(\Diag(L)\xi)$,
    \item (Riemannian metric) $g^{\mathrm{QLC}}_\Gamma(\xi,\xi)=\dotprod{f(\Gamma^{-1}\hor_\Gamma(\xi))}{\Gamma^{-1}\hor_\Gamma(\xi)}$,
    \item (Exponential map) $\Exp^{\mathrm{QLC}}_\Gamma(t\xi)=\Diag(\Exp^{\mathrm{LC}}_\Gamma(t\,\hor(\xi)))^{-1}\Exp^{\mathrm{LC}}_\Gamma(t\,\hor(\xi))$.
\end{enumerate}
In particular, quotient-Lie-Cholesky metrics are geodesically complete.
\end{lemma}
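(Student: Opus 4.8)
The plan is to treat all three identities as consequences of the general theory of Riemannian submersions, the only non-formal ingredient being the explicit horizontal lift in the first item. First I would record that $\pi:\LT^+(n)\lto\LT^1(n)$, $\pi(L)=\Diag(L)^{-1}L$, is a Riemannian submersion onto $(\LT^1(n),g^{\mathrm{QLC}})$: its fibers are the left orbits $\Diag^+(n)L$ (so that the vertical space is $\mc{V}_L=\Diag(n)L$, as established in the previous lemma), and the Lie-Cholesky metric $\mathrm{LC}(f)$, being left-invariant on $\LT^+(n)$, is in particular invariant under left multiplication by $\Diag^+(n)\subset\LT^+(n)$; hence the quotient metric is well defined and $\pi$ is a Riemannian submersion. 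Throughout I would use the canonical representative $\Gamma=\Diag(L)^{-1}L$ of the fiber, for which $\Diag(\Gamma)=I_n$ and $\pi(\Gamma)=\Gamma$, so that the evaluations at the base point $\Gamma$ in items two and three are legitimate.

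For the horizontal lift, the key step is to show that $\Diag(L)\xi$ is a (not necessarily horizontal) lift of $\xi$, i.e. $d_L\pi(\Diag(L)\xi)=\xi$. Using $d_L\pi(Z)=\Diag(L)^{-1}(Z-\Diag(L^{-1}Z)L)$ from the previous lemma, I would compute $L^{-1}\Diag(L)\xi=\Gamma^{-1}\xi$ and observe that this matrix is strictly lower triangular: $\Gamma^{-1}\in\LT^1(n)$ and $\xi\in\LT^0(n)$, so the diagonal of $\Gamma^{-1}\xi$ vanishes, whence $\Diag(L^{-1}\Diag(L)\xi)=0$ and $d_L\pi(\Diag(L)\xi)=\xi$. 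Since $\hor_L(\cdot)=\cdot-\ver_L(\cdot)$ with $\ver_L$ valued in $\mc{V}_L=\ker d_L\pi$, the vector $\hor_L(\Diag(L)\xi)$ is horizontal and still projects to $\xi$; by uniqueness of the horizontal lift it equals $\xi^\#_L$.

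The remaining two items are then formal. For the metric, the definition of the quotient metric gives $g^{\mathrm{QLC}}_\Gamma(\xi,\xi)=g^{\mathrm{LC}(f)}_\Gamma(\xi^\#_\Gamma,\xi^\#_\Gamma)$; since $\Diag(\Gamma)=I_n$ the first item yields $\xi^\#_\Gamma=\hor_\Gamma(\xi)$, and left-invariance of $\mathrm{LC}(f)$ gives $g^{\mathrm{LC}(f)}_\Gamma(Z,Z)=\dotprod{f(\Gamma^{-1}Z)}{\Gamma^{-1}Z}$, which upon substituting $Z=\hor_\Gamma(\xi)$ is exactly the stated formula. For the exponential map, I would invoke the fundamental property of Riemannian submersions \cite{ONeill66}: a geodesic with horizontal initial velocity stays horizontal and projects to a geodesic downstairs, so $\Exp^{\mathrm{QLC}}_\Gamma(t\xi)=\pi(\Exp^{\mathrm{LC}}_\Gamma(t\,\hor(\xi)))$ with $\hor(\xi)=\xi^\#_\Gamma$; writing out $\pi(M)=\Diag(M)^{-1}M$ gives the claim. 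Geodesic completeness follows immediately: Lie-Cholesky metrics are geodesically complete, so $\Exp^{\mathrm{LC}}_\Gamma(t\,\hor(\xi))$ is defined for all $t\in\R$, hence so is its projection.

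There is essentially no hard analytic step here: the only genuine computation is the vanishing of $\Diag(\Gamma^{-1}\xi)$ in the first item, and the only point that requires care is verifying that $\pi$ is a genuine Riemannian submersion (equivalently, that $\mathrm{LC}(f)$ descends), after which the O'Neill machinery delivers items two and three together with the completeness statement directly.
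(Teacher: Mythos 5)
Your proof is correct and follows essentially the same route as the paper: you verify $d_L\pi(\Diag(L)\xi)=\xi$ (your observation that $L^{-1}\Diag(L)\xi=\Gamma^{-1}\xi$ is strictly lower triangular is in fact a slightly cleaner justification of the vanishing diagonal term than the paper's), deduce the horizontal lift by projecting, and then obtain the metric and exponential formulas from the definition of the quotient metric and the O'Neill property of Riemannian submersions. The only difference is that you spell out the background the paper leaves implicit (that $\mathrm{LC}(f)$ descends and $\pi$ is a Riemannian submersion, and the completeness argument), which is harmless.
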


\begin{proof}
For all $\xi\in\LT^0(n)$, we have $d_L\pi(\Diag(L)\xi)=\Diag(L)^{-1}(\Diag(L)\xi-\Diag(\xi)L)=\xi$. Hence $\xi^\#_L=\hor_L(\Diag(L)\xi)$. In particular, $\xi^\#_\Gamma=\hor_\Gamma(\xi)$. Then the metric and the exponential map simply write $g^{\mathrm{QLC}}_\Gamma(\xi,\xi)=g^{\mathrm{LC}}_\Gamma(\xi^\#_\Gamma,\xi^\#_\Gamma)$ and $\Exp^{\mathrm{QLC}}_\Gamma(t\xi)=\pi(\Exp^{\mathrm{LC}}_\Gamma(t\xi^\#_\Gamma)$.
\end{proof}

The Riemannian logarithm and the Riemannian distance can then be computed by minimizing the Lie-Cholesky distance along a fiber. The Lie-Cholesky distance is itself computed numerically with efficient tools on Lie groups \cite{Guigui21}.

In this section, we gave an overview of the congruence action of several matrix Lie groups on SPD matrices. It allowed to understand that the natural extension of affine-invariant metrics to non-permutation-invariant metrics is the family of $\LT^+(n)$-invariant metrics. We showed that such metrics are pullbacks of left-invariant metrics on the Lie group $\LT^+(n)$ by the Cholesky diffeomorphism. Hence, the space of lower triangular matrices and the Cholesky map naturally appear when one wants to get rid of the invariance under permutations on SPD matrices. Following the same construction as for the quotient-affine metric, we built quotient-Lie-Cholesky metrics and we showed that most of the interesting Riemannian operations can be computed numerically. However, this geometry is not completely satisfying since we have no formula in closed form and no obvious nice geometric properties except geodesic completeness. Moreover, the numerical computation of the Riemannian logarithm and the Riemannian distance are done with the generic methods in quotient manifolds which can be heavy and unstable. Furthermore, the quotient-affine metric has unbounded curvature as shown in Section 2 and it is difficult to say something about the curvature of general quotient-Lie-Cholesky metrics.

Hence in the following section, we continue to rely on lower triangular matrices and the Cholesky map to define Riemannian metrics with simpler geometries than the general quotient geometry of quotient-Lie-Cholesky metrics.

\section{New geometric structures with unique mean}

In this section, we define new families of metrics on the open elliptope of full-rank correlation matrices. They are based on simple geometries of subspaces of lower triangular matrices and transported to the elliptope via the Cholesky map or the derived map $\Theta$. In Section 4.1, we introduce the poly-hyperbolic-Cholesky metrics which provide Riemannian symmetric structures. Then we introduce the Euclidean-Cholesky metrics (Section 4.2) and the log-Euclidean-Cholesky metrics (Section 4.3) that provide Euclidean structures. Equipped with a metric of one of these three families, the open elliptope is Hadamard so the Riemannian logarithm and Fréchet mean are unique. In Section 4.4, we introduce a Lie group structure that allows to define Cartan-Schouten affine connections and left-invariant metrics. The group mean of the canonical Cartan-Schouten connection is unique. In Section 4.5, we give the geodesics in dimension 2, which correspond to interpolations of one correlation coefficient.

\subsection{Riemannian symmetric space structure: poly-hyperbolic-Choles\-ky metrics}

In this section, we use the diffeomorphism $\Chol_{|\Cor^+(n)}:\Cor^+(n)\lto\mc{L}$ where $\mc{L}$ is the set of lower triangular matrices with positive diagonal such that each row is unit normed for the canonical Euclidean norm. Thus, the $k$-th row of $L=\Chol(C)\in\mc{L}$ writes $(L_{k1},...,L_{k,k-1},L_{kk},0,...,0)$ with $L_{kk}>0$. It belongs to the open hemisphere $\mathrm{H}\Sph^{k-1}=\{x\in\R^k|\|x\|=1\mathrm{~and~}x_k>0\}$. So each $L\in\mc{L}$ is a point in $\mathrm{H}\Sph^0\times\cdots\times\mathrm{H}\Sph^{n-1}$. This construction is clearly bijective and diffeomorphic. Note that since $L_{11}=1$ and $\mathrm{H}\Sph^0=\{1\}$, we can remove it from the Cartesian product. Hence, we can define the diffeomorphism $\Psi:\mc{L}\lto\mathrm{H}\Sph^1\times\cdots\times\mathrm{H}\Sph^{n-1}$. Moreover, an open hemisphere is one of the avatars of the hyperbolic space, which is the Riemannian manifold of negative constant curvature.

Before introducing the Riemannian metric induced by the diffeomorphism $\Psi$, we recall the definition and Riemannian operations of the hyperbolic space in the model of the hyperboloid.

\begin{theorem}[Hyperbolic geometry of the hyperboloid]
The vector space $\R^{k+1}$ is endowed with the non-degenerate quadratic form $Q(x)=\sum_{i=1}^kx_i^2-x_{k+1}^2$. We denote $(x,y)\lmto Q(x,y)$ the associated symmetric bilinear form. The hyperboloid $\Hyp^k$ is the Riemannian manifold defined by $\Hyp^k=\{x\in\R^{k+1}|Q(x)=-1\}$ endowed with the induced pseudo-metric, which is a Riemannian metric on $\Hyp^k$. The tangent space writes $T_x\Hyp^k=\{v\in\R^{k+1}|Q(x,v)=0\}$. For all $x,y\in\Hyp^k$, $v\in T_x\Hyp^k$ and $w\in T_x\Hyp^k$ non colinear to $v$, the Riemannian operations are:
\begin{enumerate}[label=$\cdot$]
    \itemsep0em
    \item (Riemannian metric) $g^{\Hyp}_x(v,v)=Q(v)$,
    \item (Riemannian norm) $\|v\|_\Hyp=\sqrt{Q(v)}$,
    \item (Riemannian distance) $d_\Hyp(x,y)=\arccos(-Q(x,y))$,
    \item (Exponential map) $\Exp_x^\Hyp(v)=\cosh(\|v\|_\Hyp)x+\sinh(\|v\|_\Hyp)\frac{v}{\|v\|_\Hyp}$,
    \item (Logarithm map) $\Log^\Hyp_x(y)=d^\Hyp(x,y)\frac{y+Q(x,y)x}{\|y+Q(x,y)x\|_\Hyp}$,
    \item (Sectional curvature) $\kappa^\Hyp_x(v,w)=-1$.
\end{enumerate}

The formulae on the other models of the hyperbolic space can be obtained by pullback via the appropriate diffeomorphism. In particular, the diffeomorphism between the open hemisphere $\mathrm{H}\Sph^k=\{(x_1,...,x_{k+1})\in\R^k\times\R^+|\sum_{i=1}^{k+1}x_i^2=1\}$ and the hyperbolic space $\Hyp^k$ is:
\begin{equation}
    \varphi^{\Sph\Hyp}:(x_1,...,x_{k+1})\in\mathrm{H}\Sph^k\lmto\frac{1}{x_{k+1}}(x_1,...,x_k,1)\in\Hyp^k.
\end{equation}
Thus the natural metric on the open hemisphere is the pullback metric $g^{\mathrm{H}\Sph}=(\varphi^{\Sph\Hyp})^*g^{\Hyp}$.
\end{theorem}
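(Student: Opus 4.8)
The plan is to realize $\Hyp^k$ as a pseudo-Riemannian hypersurface of the flat space $(\R^{k+1},Q)$ and to read off every listed operation from this embedding. First I would compute the tangent space by differentiating the defining equation: the map $x\lmto Q(x)$ has differential $v\lmto 2Q(x,v)$, which is nonzero on $\Hyp^k$ (take $v=x$, giving $-2$), so $-1$ is a regular value and the level set is a regular hypersurface with $T_x\Hyp^k=\ker(v\lmto Q(x,v))=\{v\,|\,Q(x,v)=0\}=(\R x)^{\perp_Q}$. To verify that the induced form is a genuine Riemannian metric, I would apply Sylvester's law of inertia: $Q$ has signature $(k,1)$, and since $Q(x,x)=-1<0$ the line $\R x$ is negative definite, so its $Q$-orthogonal complement $T_x\Hyp^k$ is positive definite. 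Hence $g^\Hyp_x(v,v)=Q(v)>0$ for $v\ne 0$ and the Riemannian norm $\|v\|_\Hyp=\sqrt{Q(v)}$ is well defined.

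Next I would exhibit the geodesics explicitly, after which the exponential map, the distance and the logarithm follow purely algebraically. For a unit vector $u=v/\|v\|_\Hyp\in T_x\Hyp^k$ I would consider $\gamma(t)=\cosh(t)\,x+\sinh(t)\,u$. Using $Q(x)=-1$, $Q(u)=1$ and $Q(x,u)=0$, a one-line computation gives $Q(\gamma(t))=-\cosh^2 t+\sinh^2 t=-1$, so $\gamma$ stays on $\Hyp^k$; moreover $\gamma''(t)=\gamma(t)$, which is $Q$-normal to the hyperboloid at $\gamma(t)$ (the position vector is the normal direction), so the tangential component of the acceleration vanishes and $\gamma$ is a unit-speed geodesic. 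This yields $\Exp^\Hyp_x(v)=\gamma(\|v\|_\Hyp)$, the stated formula. Writing $y=\Exp^\Hyp_x(tu)$ and pairing with $x$ gives $-Q(x,y)=\cosh t$, hence $d_\Hyp(x,y)=\mathrm{arccosh}(-Q(x,y))$; solving $y=\cosh(t)x+\sinh(t)u$ for $u$ and using $Q(y+Q(x,y)x)=Q(x,y)^2-1=\sinh^2 t$ recovers $\Log^\Hyp_x(y)=d_\Hyp(x,y)\frac{y+Q(x,y)x}{\|y+Q(x,y)x\|_\Hyp}$.

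For the sectional curvature I would use the Gauss equation, viewing $\Hyp^k$ as a hypersurface of the flat ambient space with unit normal field $N(x)=x$, for which $Q(N,N)=-1=:\epsilon$. Since $N$ is the position vector, its ambient derivative is $\bar\nabla_v N=v$, so the shape operator is $-\Id$ and the scalar second fundamental form is $\mathrm{II}(v,w)=-Q(v,w)$ (its overall sign is irrelevant, as $\mathrm{II}$ enters quadratically). The Gauss equation for a semi-Riemannian hypersurface of a flat space then gives, for a $g^\Hyp$-orthonormal pair $v,w$, $\kappa^\Hyp_x(v,w)=\epsilon\,(\mathrm{II}(v,v)\mathrm{II}(w,w)-\mathrm{II}(v,w)^2)=\epsilon\,(Q(v,v)Q(w,w)-Q(v,w)^2)=(-1)\cdot 1=-1$.

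Finally, the hemisphere model is essentially definitional. I would check that $\varphi^{\Sph\Hyp}$ is a diffeomorphism onto $\Hyp^k$: substituting its formula into $Q$ and using $\sum_{i=1}^{k+1}x_i^2=1$ gives $Q(\varphi^{\Sph\Hyp}(x))=x_{k+1}^{-2}(\sum_{i=1}^k x_i^2-1)=x_{k+1}^{-2}(-x_{k+1}^2)=-1$, and the explicit inverse $y\lmto y_{k+1}^{-1}(y_1,\dots,y_k,1)$ shows bijectivity; the metric $g^{\mathrm{H}\Sph}$ is then the pullback by definition. The only genuinely geometric inputs are the geodesic identification and the curvature, everything else being algebra once the embedding picture is fixed; accordingly the main point of care will be the sign bookkeeping in the Gauss equation caused by the indefinite ambient metric, which is precisely what makes the curvature negative rather than positive.
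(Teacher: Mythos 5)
The paper contains no proof of this theorem: it is stated as a recollection of classical facts about the hyperboloid model of hyperbolic space, to be used later as the building block of the poly-hyperbolic-Cholesky metrics. Your argument is therefore not competing with anything in the paper; it is the standard self-contained derivation (regular level set, signature count via Sylvester's law, explicit geodesics whose acceleration is $Q$-normal to the hypersurface, Gauss equation for a semi-Riemannian hypersurface, and the algebraic check for the hemisphere chart), and it is correct. One finding of your proof deserves emphasis: your computation yields $d_\Hyp(x,y)=\mathrm{arccosh}(-Q(x,y))$, not $\arccos(-Q(x,y))$ as written in the statement. Since $-Q(x,y)\gs 1$ for two points on the same sheet of the hyperboloid, $\arccos$ is not even defined there, so the statement contains a typo which your derivation silently corrects; note that the same typo propagates to the distance formula $d_{\mathrm{CPHC}}$ in the paper's theorem on the symmetric space structure of $\Cor^+(n)$.

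Two points you leave implicit should be made explicit to close the argument. First, the set $\{x\in\R^{k+1}\,|\,Q(x)=-1\}$ has two connected components, and the distance formula, the logarithm, and the diffeomorphism $\varphi^{\Sph\Hyp}$ (whose image lies in $\{x_{k+1}>0\}$) only make sense on the upper sheet; $\Hyp^k$ should be understood as that sheet, a small imprecision shared with the paper's statement. Second, exhibiting a geodesic of length $\mathrm{arccosh}(-Q(x,y))$ joining $x$ to $y$ only bounds the Riemannian distance from above; to get equality you need that geodesic to be minimizing. This follows from ingredients you already have --- the exponential map is defined on all of $T_x\Hyp^k$ (completeness), the sheet is simply connected, and you computed $\kappa\equiv-1\ls 0$, so Cartan--Hadamard applies and geodesics are unique minimizers --- but the sentence invoking it is missing. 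With these two additions your proof is complete.
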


\begin{definition}[Poly-hyperbolic-Cholesky metrics]
Let $\alpha_1,...,\alpha_{n-1}>0$ be positive coefficients. A poly-hyperbolic-Cholesky metric on $\Cor^+(n)$ is the pullback metric $g^{\mathrm{PHC}}=(\Psi\circ\Chol)^*(\alpha_1g^{\mathrm{H}\Sph^1}\oplus...\oplus \alpha_{n-1}g^{\mathrm{H}\Sph^{n-1}})$ by the map $\Chol\circ\Psi$ of a weighted product metric on the product of hyperbolic spaces $\mathrm{H}\Sph^1\times\cdots\times\mathrm{H}\Sph^{n-1}$. The PHC metric with all weights equal to 1 is called the canonical PHC metric.
\end{definition}

\begin{theorem}[Symmetric space structure]
The manifold of full-rank correlation matrices $\Cor^+(n)$ equipped with a poly-hyperbolic-Cholesky metric is a Riemannian symmetric space of non-positive sectional curvature bounded by $[a,0]$ with $a=-\frac{1}{\min_{i\gs 2}\alpha_i}$. For $n\gs 3$, it is not of constant curvature. The canonical PHC metric writes for all $C\in\Cor^+(n)$ and $X\in T_C\Cor^+(n)\simeq\Hol(n)$:
\begin{equation}
    g^{\mathrm{CPHC}}_C(X,X)=\|\Diag(L)^{-1}L\,\low_\S(L^{-1}XL^{-\top})\|^2,
\end{equation}
where $L=\Chol(C)\in\mc{L}$. The square distance between $C$ and $C'=\phi(L')$ writes:
\begin{equation}
    d_{\mathrm{CPHC}}(C,C')^2=\sum_{i=2}^n\arccos(-Q(L_i^\top,{L_i'}^\top))^2,
\end{equation}
where $L_i,L_i'$ are the $i$-th rows of $L,L'$ respectively.
\end{theorem}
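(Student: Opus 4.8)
The plan is to exploit that, by its very definition, $g^{\mathrm{PHC}}$ is the pullback of the product metric by the diffeomorphism $\Psi\circ\Chol:\Cor^+(n)\lto\mathrm{H}\Sph^1\times\cdots\times\mathrm{H}\Sph^{n-1}$, so $\Psi\circ\Chol$ is by construction a Riemannian isometry onto the weighted product $\bigoplus_{i=1}^{n-1}\alpha_i g^{\mathrm{H}\Sph^i}$. Composing each factor with the isometry $\varphi^{\Sph\Hyp}:\mathrm{H}\Sph^i\lto\Hyp^i$, this target is in turn isometric to a weighted product $\prod_{i=1}^{n-1}\Hyp^i$ whose $i$-th factor carries $\alpha_i g^{\Hyp}$, a space form of constant curvature $-1/\alpha_i$. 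Since being a Riemannian symmetric space, the sectional-curvature bounds, the (non-)constancy of curvature and the Riemannian distance are all isometry invariants, it suffices to establish each claim on this product and transport it back.

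For the symmetric-space structure, I would recall that each $\Hyp^i$ is a symmetric space, that rescaling a metric by a positive constant preserves this property, and that a Riemannian product of symmetric spaces is symmetric (the product of the factorwise geodesic symmetries is an isometric geodesic symmetry). For the curvature, I would use that the curvature tensor of a product splits over the factors: for a plane spanned by orthonormal $u=(u_1,\dots,u_{n-1})$ and $w=(w_1,\dots,w_{n-1})$, one has $\kappa(u,w)=\sum_{i=1}^{n-1}(-\tfrac{1}{\alpha_i})A_i$ with $A_i=|u_i|^2|w_i|^2-\langle u_i,w_i\rangle^2\gs 0$ (Cauchy--Schwarz in each factor, the norms and products being those of the weighted factor metrics). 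The one-dimensional factor $\mathrm{H}\Sph^1$ forces $A_1=0$, and orthonormality gives $\sum_{i\gs 2}A_i\ls\sum_i|u_i|^2|w_i|^2\ls 1$. Hence $\kappa(u,w)\ls 0$ and $\kappa(u,w)\gs(\min_{i\gs 2}(-1/\alpha_i))\sum_{i\gs 2}A_i\gs\min_{i\gs 2}(-1/\alpha_i)=-1/\min_{i\gs 2}\alpha_i=a$, i.e. $\kappa\in[a,0]$. I would then exhibit a plane inside the factor attaining $\min_{i\gs 2}\alpha_i$ to realize $a$ and a plane spanning two distinct factors to realize $0$; and for $n\gs 3$ the factor $\mathrm{H}\Sph^{n-1}$ (dimension $n-1\gs 2$) carries planes of curvature $-1/\alpha_{n-1}<0$ while mixed planes have curvature $0$, so the curvature is non-constant.

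For the canonical metric formula, I would first differentiate the Cholesky map: by \eqref{eq:diff_Chol}, for $X\in T_C\Cor^+(n)\simeq\Hol(n)$ and $L=\Chol(C)$, the image $Z=d_C\Chol(X)=L\,\low_\S(L^{-1}XL^{-\top})$ is tangent to $\mc{L}$, with rows $Z_1,\dots,Z_n$ (and $Z_1=0$ since $L_{11}\equiv 1$). Next I would compute the hemisphere metric: differentiating $\varphi^{\Sph\Hyp}$ at $x\in\mathrm{H}\Sph^k$ and using the constraints $\|x\|=1$ and $\langle x,v\rangle=0$ yields $g^{\mathrm{H}\Sph^k}_x(v,v)=Q(d_x\varphi^{\Sph\Hyp}(v))=\|v\|^2/x_{k+1}^2$, a conformal rescaling of the round metric by the inverse square of the positive last coordinate. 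Applied to the $i$-th row, whose positive coordinate is $L_{ii}$, this gives $g^{\mathrm{H}\Sph^{i-1}}_{L_i}(Z_i,Z_i)=\|Z_i\|^2/L_{ii}^2$; summing over $i$ produces $\sum_i\|Z_i\|^2/L_{ii}^2=\|\Diag(L)^{-1}Z\|^2=\|\Diag(L)^{-1}L\,\low_\S(L^{-1}XL^{-\top})\|^2$, as claimed.

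For the distance, I would use that in a Riemannian product the squared distance is the sum of the squared factor distances, so $d_{\mathrm{CPHC}}(C,C')^2=\sum_{i=2}^n d_{\mathrm{H}\Sph^{i-1}}(L_i,L_i')^2$. Each factor distance equals the hyperbolic distance between the images under $\varphi^{\Sph\Hyp}$, which by the hyperboloid theorem is $\arccos(-Q(\cdot,\cdot))$ evaluated on those images; substituting the explicit form of $\varphi^{\Sph\Hyp}$ and simplifying with the unit-norm constraints $\|L_i\|=\|L_i'\|=1$ should collapse each factor distance to the stated per-row expression. The step I expect to be the main obstacle is precisely this last bookkeeping: deriving the conformal factor $1/L_{ii}^2$ of the hemisphere metric and then tracking $\varphi^{\Sph\Hyp}$ carefully enough that the factorwise hyperbolic distances reduce exactly to the compact $Q$-expression on the rows. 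By contrast, the symmetric-space and curvature claims are immediate once the isometry to a product of space forms is in place, since they only invoke standard facts about Riemannian products of constant-curvature manifolds.
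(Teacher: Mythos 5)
Your proposal takes essentially the same route as the paper's proof: transport everything through the isometry $\Psi\circ\Chol$ to the weighted product of hemispheres, use that a product of (rescaled) symmetric spaces is symmetric, bound the curvature through the product structure, and obtain the metric formula by combining the conformal factor $g^{\mathrm{H}\Sph^k}_x(v,v)=\|v\|^2/x_{k+1}^2$ with the differential of the Cholesky map and summing over rows. The only difference of substance on these points is that the paper merely cites the fact that a product of manifolds with curvature in $[a,b]$, $a\ls 0\ls b$, again has curvature in $[a,b]$ and exhibits bivectors attaining the bounds, whereas you rederive this from the splitting of the curvature tensor and Cauchy--Schwarz; both are correct, and your realization of the endpoints and of non-constancy for $n\gs 3$ matches the paper's choice of planes.

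The caveat is the step you yourself flagged, and your suspicion was warranted: the hoped-for ``collapse'' of the factor distances onto the printed per-row expression does not happen, and no bookkeeping will make it happen. Your strategy correctly identifies the factor distance as the hyperbolic distance between the images $\varphi^{\Sph\Hyp}(L_i^\top)$ and $\varphi^{\Sph\Hyp}({L_i'}^\top)$, namely $\mathrm{arccosh}\bigl(-Q(\varphi^{\Sph\Hyp}(L_i^\top),\varphi^{\Sph\Hyp}({L_i'}^\top))\bigr)$ (note that the hyperboloid distance is $\mathrm{arccosh}$, not $\arccos$, since $-Q(x,y)\gs 1$ for $x,y\in\Hyp^k$; the $\arccos$ in the paper's hyperboloid theorem is itself a typo). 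But $Q(\varphi^{\Sph\Hyp}(x),\varphi^{\Sph\Hyp}(y))=\bigl(\sum_{j\ls k}x_jy_j-1\bigr)/(x_{k+1}y_{k+1})$, which is not $Q(x,y)$: the hyperboloid images cannot be replaced by the raw rows. A sanity check makes this concrete: for $L_i=L_i'$ the printed formula gives $-Q(L_i^\top,L_i^\top)=2L_{ii}^2-1$, hence a ``self-distance'' $\arccos(2L_{ii}^2-1)=2\arccos(L_{ii})\ne 0$ in general. So the distance display in the theorem, read literally, is a misprint; the correct output of your construction is $d_{\mathrm{CPHC}}(C,C')^2=\sum_{i=2}^n\mathrm{arccosh}\bigl(-Q(\varphi^{\Sph\Hyp}(L_i^\top),\varphi^{\Sph\Hyp}({L_i'}^\top))\bigr)^2$, which in dimension $2$ indeed reproduces the Fisher-transformation distance of Theorem \ref{thm:dimension2}. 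The paper's own proof never addresses the distance formula at all, so on this point your argument is exactly as complete as the paper's, and your instinct that this was the fragile step was right.
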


\begin{proof}
The product of Riemannian symmetric spaces is a Riemannian symmetric space. The product of manifolds with sectional curvature bounded by $[a,b]$ with $a\ls 0\ls b$ has its sectional curvature bounded by $[a,b]$. (This is also valid for $(-\infty,b]$ and $[a,+\infty)$.) Let $k\gs 2$ such that $\alpha_k=\min_{i\gs 2}\alpha_i$. The values $a=-\frac{1}{\alpha_k}$ and $b=0$ are clearly reached for $n\gs 3$ by bivectors $(X,Y)$ and $(X,Z)$ respectively, where $X=(0,...,0,X_k,0,...,0)$, $Y=(0,...,0,Y_k,0,...,0)$ and $Z=(Z_1,0,...,0)\in T(\mathrm{H}\Sph^1\times\cdots\times\mathrm{H}\Sph^{n-1})$. To express the Riemannian metric, we take the pullback of the Riemannian metric on the hyperboloid $\Hyp^n$ defined by $g_x(v,v)=\sum_{k=1}^n{v_k^2}-v_{n+1}^2$ by the diffeomorphism $\varphi^{\mathbb{SH}}:(x_1,...,x_{n+1})\in\mathrm{H}\Sph^n\lmto\frac{1}{x_{n+1}}(x_1,...,x_n,1)\in\Hyp^n$. We compute for all $x\in\mathrm{H}\Sph^n$ and all $v\in T_x\mathrm{H}\Sph^n$, using $\sum_{k=1}^{n+1}x_k^2=1$ and $\sum_{k=1}^{n+1}x_kv_k=0$:
\begin{align*}
    d_x\varphi^{\mathbb{SH}}(v)&=\frac{1}{x_{n+1}}\left(v_1-x_1\frac{v_{n+1}}{x_{n+1}},...,v_n-x_n\frac{v_{n+1}}{x_{n+1}},-\frac{v_{n+1}}{x_{n+1}}\right),\\
    g^{\mathrm{H}\Sph}_x(v,v)&=g^{\Hyp}_{\varphi(x)}(d_x\varphi^{\mathbb{SH}}(v),d_x\varphi^{\mathbb{SH}}(v))\\
    &=\frac{1}{x_{n+1}^2}\left(\sum_{k=1}^n\left(v_k-x_k\frac{v_{n+1}}{x_{n+1}}\right)^2-\frac{v_{n+1}^2}{x_{n+1}^2}\right)\\
    &=\frac{1}{x_{n+1}^2}\left(\sum_{k=1}^n\left(v_k^2-2x_kv_k\frac{v_{n+1}}{x_{n+1}}+x_k^2\frac{v_{n+1}^2}{x_{n+1}^2}\right)-\frac{v_{n+1}^2}{x_{n+1}^2}\right)\\
    &=\frac{1}{x_{n+1}^2}\left(\sum_{k=1}^nv_k^2+2v_{n+1}^2+(1-x_{n+1}^2)\frac{v_{n+1}^2}{x_{n+1}^2}-\frac{v_{n+1}^2}{x_{n+1}^2}\right)\\
    &=\frac{\|v\|^2}{x_{n+1}^2}.
\end{align*}
Hence, for all $C\in\Cor^+(n)$, $X\in\Hol(n)$, $L=\Chol(C)\in\mc{L}$ and $Y=d_C\Chol(X)=L\,\low_\S(L^{-1}XL^{-\top})$ with $Y_{11}=\frac{1}{2}L_{11}[L^{-1}XL^{-\top}]_{11}=\frac{X_{11}}{2L_{11}}=0$, we have:
\begin{align*}
    g^{\mathrm{PHC}}_C(X,X)&=g^{\mathrm{H}\Sph^1\times\cdots\times\mathrm{H}\Sph^{n-1}}_L(Y,Y)\\
    &=\sum_{i=2}^n\frac{\|Y_{i\bullet}\|^2}{L_{ii}^2}=\sum_{i=1}^n\|\Diag(L)^{-1}_{ii}Y_{i\bullet}\|^2\\
    &=\|\Diag(L)^{-1}Y\|^2=\|\Diag(L)^{-1}L\,\low_\S(L^{-1}XL^{-\top})\|^2.
\end{align*}

Note that the general PHC metric writes $g^{\mathrm{PHC}}_C(X,X)=\sum_{i=2}^n\|\Diag(L)_{ii}^{-1}\alpha_iY_{i\bullet}\|^2$ and the general PHC distance writes $d(C,C')^2=\sum_{i=2}^n\alpha_i\arccos(-Q(L_i^\top,{L_i'}^\top))^2$.
\end{proof}

\subsection{Vector space structure: Euclidean-Cholesky metrics}

The Cholesky map was already used on SPD matrices \cite{Wang04}. The Euclidean-Cholesky metric is defined as the Euclidean metric on the Cholesky factor belonging to $\LT^+(n)$. However, it is not complete because $\LT^+(n)$ is open in $\LT(n)$. A solution to this problem is to take the logarithm of the diagonal before taking the Euclidean metric \cite{Pinheiro96,Lin19}. It amounts to define the product metric of a Euclidean metric on the strictly lower part and a log-Euclidean metric on the diagonal part.

For correlation matrices, we can use the diffeomorphism $\Theta:\Cor^+(n)\lto\LT^1(n)$. Since the diagonal is $I_n$, the two mentioned metrics reduce to the same metric on the open elliptope. We call it the Euclidean-Cholesky metric.

\begin{definition}[Euclidean-Cholesky metrics]
The Euclidean-Cholesky metrics on full-rank correlation matrices are pullback metrics by $\Theta:\Cor^+(n)\lto\LT^1(n)$ of inner products on $\LT^1(n)=I_n+\LT^0(n)$.
\end{definition}

\begin{theorem}[Riemannian operations]
Let $\|\cdot\|$ be a Euclidean norm on $\LT^0(n)$. The Riemannian operations of the Euclidean-Cholesky metric associated to this norm are, for all $C,C',C_i\in\Cor^+(n)$, $X\in T_C\Cor^+(n)\simeq\Hol(n)$, $t\in\R$:
\begin{enumerate}[label=$\cdot$]
    \itemsep0em
    \item (Exponential map) $\Exp_C(tX)=\Theta^{-1}(\Theta(C)+t\,d_C\Theta(X))$,
    \item (Logarithm map) $\Log_C(C')=(d_C\Theta)^{-1}(\Theta(C')-\Theta(C))$,
    \item (Geodesic) $\gamma_{C\to C'}(t)=\Theta^{-1}((1-t)\Theta(C)+t\,\Theta(C'))$,
    \item (Distance) $d(C,C')=\|\Theta(C')-\Theta(C)\|$,
    \item (Parallel transport) $\Pi_{C\to C'}X=(d_{C'}\Theta)^{-1}(d_C\Theta(X))$,
    \item (Curvature) Null,
    \item (Fréchet mean) $\bar{C}=\Theta^{-1}(\frac{1}{n}\sum_{i=1}^n\Theta(C_i))$,
\end{enumerate}
where $d_C\Theta(X)=\Theta(C)\low_\S(L^{-1}XL^{-\top})-\frac{1}{2}\Diag(L^{-1}XL^{-\top})\Theta(C)$ and $L=\Chol(C)$. The Euclidean-Cholesky metrics are geodesically complete.
\end{theorem}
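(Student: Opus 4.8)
The plan is to exploit that, by definition, the map $\Theta:(\Cor^+(n),g)\lto(\LT^1(n),\|\cdot\|)$ is a Riemannian \emph{isometry}: the Euclidean-Cholesky metric $g$ is precisely the pullback of the constant inner product carried by the affine space $\LT^1(n)=I_n+\LT^0(n)$. Since $(\LT^1(n),\|\cdot\|)$ is a complete flat Euclidean space, all of its Riemannian operations are affine and explicit, and every Riemannian operation on $\Cor^+(n)$ is obtained by conjugating the corresponding Euclidean operation with $\Theta$. This reduces the whole theorem to two ingredients: (i) recalling the trivial operations on a Euclidean space, and (ii) a single differential computation, namely the explicit form of $d_C\Theta$.

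For ingredient (i), on the affine space $\LT^1(n)$ the Levi-Civita connection is the flat one, so the exponential and logarithm are $\Exp_p(v)=p+v$ and $\Log_p(q)=q-p$; geodesics are straight lines $t\lmto(1-t)p+tq$; the distance is $\|q-p\|$; the curvature vanishes identically; under the canonical identification of all tangent spaces with the model vector space $\LT^0(n)$, parallel transport is the identity; and the Fréchet functional $q\lmto\sum_i\|q-p_i\|^2$ is strictly convex, with unique minimizer the arithmetic mean $\frac1n\sum_i p_i$. Transporting each of these through the isometry $\Theta$ (replacing $p,q,v$ by $\Theta(C),\Theta(C'),d_C\Theta(X)$ and post-composing with $\Theta^{-1}$, respectively pre-composing tangent maps with $d_C\Theta$ and post-composing with $(d_{C'}\Theta)^{-1}$) yields exactly the seven claimed formulas. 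The curvature is null because flatness is an isometry invariant; geodesic completeness follows because $\LT^1(n)$ is a whole affine space, so the straight line $\Theta(C)+t\,d_C\Theta(X)$ exists for all $t\in\R$ and $\Theta^{-1}$ carries it to a geodesic of $\Cor^+(n)$ defined for all time; and uniqueness of the Fréchet mean is inherited verbatim, since $\Theta$ preserves distances and hence the Fréchet functional through the bijection $\Theta$.

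The only genuine computation is the formula for $d_C\Theta$. I would write $\Theta(C)=\Diag(L)^{-1}L$ with $L=\Chol(C)\in\mc{L}$ and differentiate by the product rule, using the differential of the Cholesky map from \eqref{eq:diff_Chol}, $d_C\Chol(X)=L\,\low_\S(L^{-1}XL^{-\top})$. Writing $S=\low_\S(L^{-1}XL^{-\top})$, the key simplification is that for lower triangular $L$ and $S$ one has $\Diag(LS)=\Diag(L)\Diag(S)=\frac12\Diag(L)\Diag(L^{-1}XL^{-\top})$, whence $d(\Diag(L)^{-1})(X)=-\Diag(L)^{-1}\Diag(LS)\Diag(L)^{-1}=-\frac12\Diag(L^{-1}XL^{-\top})\Diag(L)^{-1}$. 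Combining the two terms gives $d_C\Theta(X)=\Theta(C)\,\low_\S(L^{-1}XL^{-\top})-\frac12\Diag(L^{-1}XL^{-\top})\,\Theta(C)$, as stated; inspecting the diagonals shows they cancel, confirming $d_C\Theta(X)\in\LT^0(n)=T_{\Theta(C)}\LT^1(n)$ as required.

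The main obstacle is therefore not conceptual but a matter of bookkeeping: correctly differentiating the product $\Diag(L)^{-1}L$ and exploiting the lower-triangular structure to reduce $\Diag(LS)$ to a product of diagonal factors. Once $d_C\Theta$ is in hand, nothing remains but to substitute it into the isometry-transfer formulas of ingredient (i).
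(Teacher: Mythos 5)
Your proposal is correct and follows the same route the paper takes (implicitly, since the paper states this theorem without a written proof): the Euclidean-Cholesky metric is by definition the pullback of a Euclidean structure through $\Theta$, so $\Theta$ is an isometry onto the complete flat affine space $\LT^1(n)=I_n+\LT^0(n)$ and every operation transfers by conjugation, the only real work being the product-rule computation of $d_C\Theta$ from $d_C\Chol(X)=L\,\low_\S(L^{-1}XL^{-\top})$. Your derivation of $d_C\Theta$, including the key identity $\Diag(LS)=\Diag(L)\Diag(S)$ for lower triangular factors and the verification that the result lands in $\LT^0(n)$, matches the formula stated in the theorem.
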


These metrics are flat, geodesically complete and the Riemannian operations are trivial. Since they reduce to (the pullback of) an inner product on a \textit{vector space}, we prefer not to use the term \textit{Lie group} for them, contrarily to the terminology of \cite{Li17,Lin19}. We prefer to reserve it for Lie groups that are not vector spaces, such as the natural Lie group structure of $\LT^+(n)$ (with matrix multiplication) underlying Lie-Cholesky metrics.

\subsection{Vector space structure: log-Euclidean-Cholesky metrics}

Another map was used to Euclideanize the manifold $\LT^+(n)$: the matrix logarithm \cite{Li17}. Indeed, the matrix exponential is a smooth diffeomorphism from $\LT(n)$ to $\LT^+(n)$ \cite{Gallier08}. We can use the same idea for correlation matrices since the matrix exponential is a smooth diffeomorphism from $\LT^0(n)$ to $\LT^1(n)$. Moreover it has a particularly simple expression:
\begin{equation}
    \exp(\xi)=\sum_{k=0}^{n-1}\frac{1}{k!}\xi^k,
\end{equation}
because $\LT^0(n)$ is a nilpotent algebra. Then the logarithm $\log:\LT^1(n)\lto\LT^0(n)$ is simply:
\begin{equation}
    \log(Z)=\sum_{k=1}^{n-1}\frac{(-1)^{k-1}}{k}(Z-I_n)^k.
\end{equation}
Therefore, the differential of the logarithm writes:
\begin{equation}
    d_Z\log(\xi)=\sum_{k=1}^{n-1}\frac{(-1)^k}{k}[(Z-I_n)^{k-1}\xi+(Z-I_n)^{k-2}\xi(Z-I_n)+...+\xi(Z-I_n)^{k-1}].
\end{equation}

\begin{definition}[Log-Euclidean-Cholesky metrics]
The log-Euclidean-Cholesky metrics on full-rank correlation matrices are pullback metrics by $\log\circ\,\Theta:\Cor^+(n)\lto\LT^0(n)$ of inner products on $\LT^0(n)$.
\end{definition}

\begin{theorem}[Riemannian operations]
Let $\|\cdot\|$ be a Euclidean norm on $\LT^0(n)$. The Riemannian operations of log-Euclidean-Cholesky metrics associated to this norm are, for all $C,C',C_i\in\Cor^+(n)$, $X\in T_C\Cor^+(n)\simeq\Hol(n)$, $t\in\R$:
\begin{enumerate}[label=$\cdot$]
    \itemsep0em
    \item (Exponential map) $\Exp_C(tX)=\Theta^{-1}\circ\exp(\log(\Theta(C))+t\,d_C(\log\circ\,\Theta)(X))$,
    \item (Logarithm map) $\Log_C(C')=(d_C(\log\circ\,\Theta))^{-1}(\log(\Theta(C'))-\log(\Theta(C)))$,
    \item (Geodesic) $\gamma_{C\to C'}(t)=\Theta^{-1}\circ\exp((1-t)\log(\Theta(C))+t\,\log(\Theta(C')))$,
    \item (Distance) $d(C,C')=\|\log(\Theta(C'))-\log(\Theta(C))\|$,
    \item (Parallel transport) $\Pi_{C\to C'}X=(d_{C'}(\log\circ\,\Theta))^{-1}(d_C\log\circ\,\Theta(X))$,
    \item (Curvature) Null,
    \item (Fréchet mean) $\bar{C}=\Theta^{-1}\circ\exp(\frac{1}{n}\sum_{i=1}^n\log(\Theta(C_i)))$,
\end{enumerate}
where $d_C(\log\circ\,\Theta)(X)=d_{\Theta(C)}\log(d_C\Theta(X))$. The log-Euclidean-Cholesky metrics are geodesically complete.
\end{theorem}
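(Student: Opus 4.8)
The plan is to recognize this statement as an instance of the general ``Euclideanization'' principle: when a Riemannian metric is, by construction, the pullback of a flat complete Euclidean structure through a global diffeomorphism, every Riemannian operation is obtained by transporting the (trivial) Euclidean operation through that diffeomorphism. Here the relevant map is $F=\log\circ\,\Theta:\Cor^+(n)\lto\LT^0(n)$. Since $\Theta:\Cor^+(n)\lto\LT^1(n)$ is a diffeomorphism and $\log:\LT^1(n)\lto\LT^0(n)$ is a diffeomorphism (with inverse $\exp$, both given in closed form above because $\LT^0(n)$ is nilpotent), $F$ is a diffeomorphism; and by the very definition of the log-Euclidean-Cholesky metric, $F$ is an isometry from $(\Cor^+(n),g)$ onto the Euclidean space $(\LT^0(n),\langle\cdot,\cdot\rangle)$ associated to $\|\cdot\|$. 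So the first step is simply to record these two facts.

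Next I would write down the Riemannian operations of the target Euclidean vector space, where everything is elementary: geodesics are affine segments $s\lmto u+s\,v$, the exponential map is $\Exp_u(v)=u+v$, the logarithm is $\Log_u(w)=w-u$, the distance is $d(u,w)=\|w-u\|$, parallel transport is the identity (the metric is constant, all tangent spaces being canonically $\LT^0(n)$), the curvature vanishes, and the Fréchet mean of $w_1,\dots,w_k$ is the arithmetic mean $\frac1k\sum_i w_i$, the unique minimizer of the sum of squared distances. Each displayed formula then follows by conjugation: for any isometry, $\Exp^{g}_C=F^{-1}\circ\Exp^{\mathrm{Eucl}}_{F(C)}\circ\,d_CF$, and likewise $\Log$, $\gamma$, $d$, $\Pi$, $\kappa$ and the mean transport through $F$ and its differential. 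Substituting $F^{-1}=\Theta^{-1}\circ\exp$, $F(C)=\log(\Theta(C))$ and $d_CF=d_C(\log\circ\,\Theta)$ reproduces exactly the seven stated expressions; in particular parallel transport becomes $(d_{C'}F)^{-1}\circ d_CF$ because Euclidean transport is trivial, and the Fréchet-mean formula inherits uniqueness from the Euclidean case.

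It then remains to identify the differential $d_CF$ and to justify its invertibility. By the chain rule $d_CF=d_{\Theta(C)}\log\circ\,d_C\Theta$, which is the displayed identity $d_C(\log\circ\,\Theta)(X)=d_{\Theta(C)}\log(d_C\Theta(X))$; here $d_C\Theta$ is the expression already computed in the Euclidean-Cholesky theorem and $d_Z\log$ is the series given just before the statement. Invertibility of $d_CF$ on $\LT^0(n)$ is automatic since $F$ is a diffeomorphism, so the inverse $(d_CF)^{-1}$ appearing in the logarithm and parallel-transport formulas is well defined; no further computation is needed.

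The one point that genuinely requires care, and the main obstacle compared with the plain Euclidean-Cholesky construction on $\LT^+(n)$, is geodesic completeness. Affine lines in $\LT^0(n)$ are defined for all $t\in\R$ and escape every bounded set, so pulling them back yields complete geodesics only because $F$ is a diffeomorphism onto the \emph{whole} vector space $\LT^0(n)$, not merely onto an open subset. This surjectivity is exactly what the nilpotency of $\LT^0(n)$ buys us: $\exp:\LT^0(n)\lto\LT^1(n)$ is a global diffeomorphism and $\Theta^{-1}=\Phi$ is globally defined, so $F^{-1}$ is defined on all of $\LT^0(n)$. This is precisely the feature that the plain Euclidean-Cholesky metric on $\LT^+(n)$ lacks (that domain being only open in $\LT(n)$), and it is what secures completeness here.
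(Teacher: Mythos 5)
Your proposal is correct and follows essentially the same route as the paper, which leaves this proof implicit precisely because every listed operation is the transport of the trivial Euclidean operation on $\LT^0(n)$ through the global isometry $\log\circ\,\Theta$ (with the chain rule giving $d_C(\log\circ\,\Theta)$ and nilpotency guaranteeing that $\exp:\LT^0(n)\lto\LT^1(n)$ is a global polynomial diffeomorphism, hence completeness). Your added remark contrasting this with the incomplete Euclidean-Cholesky metric on $\LT^+(n)$ for SPD matrices is accurate and consistent with the paper's discussion in Section 4.2.
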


The computation of the Riemannian operations of the Euclidean-Cholesky metrics is more straightforward than those of the log-Euclidean-Cholesky because one has to compute the differential of the triangular matrix logarithm for the latter.

\subsection{Nilpotent Lie group structure}

Another interesting structure is given by the natural Lie group structure of $\LT^1(n)$ for the matrix multiplication. This equips full-rank correlation matrices with a Lie group structure via the diffeomorphism $\Theta:\Cor^+(n)\lto\LT^1(n)$. Hence, left-invariant metrics can be defined. In analogy with $\Sym^+(n)\simeq\LT^+(n)$, they can also be called Lie-Cholesky metrics. Then all Riemannian operations can be computed numerically \cite{Guigui21} and the space is ensured to be geodesically complete. However, this doesn't give information on the sign of the curvature.

More interestingly, one can rely on the canonical Cartan-Schouten connection to define the group exponential and the notion of group mean. We can also name them after Lie-Cholesky.

\begin{theorem}[Group operations]
The group operations associated to the Lie-Cholesky group structure on full-rank correlation matrices are, for all $C,C',C_i\in\Cor^+(n)$, $X\in T_C\Cor^+(n)\simeq\Hol(n)$, $t\in\R$:
\begin{enumerate}[label=$\cdot$]
    \itemsep0em
    \item (Exponential map) $\Exp_C(tX)=\Theta^{-1}(\Theta(C)\exp(t\,\Theta(C)^{-1}d_C\Theta(X))$,
    \item (Logarithm map) $\Log_C(C')=(d_C\Theta)^{-1}(\Theta(C)\log(\Theta(C)^{-1}\Theta(C')))$,
    \item (Geodesic) $\gamma_{C\to C'}(t)=\Theta^{-1}(\Theta(C)(\Theta(C)^{-1}\Theta(C'))^t)$,
    \item (Group mean) Unique, characterized by $\sum_{i=1}^k\log(\Theta(\bar{C})^{-1}\Theta(C_i))=0$.
\end{enumerate}
\end{theorem}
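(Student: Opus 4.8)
The plan is to transport every computation to the matrix Lie group $(\LT^1(n),\times)$ through the diffeomorphism $\Theta:\Cor^+(n)\lto\LT^1(n)$, which by construction is a Lie group isomorphism once $\Cor^+(n)$ carries the pullback law $C\cdot C'=\Theta^{-1}(\Theta(C)\Theta(C'))$. Because the canonical Cartan-Schouten connection is a purely group-theoretic object, preserved by Lie group isomorphisms, it suffices to write its exponential, logarithm, geodesics and group mean on $\LT^1(n)$ and then push them back by $\Theta^{-1}=\Phi$, carrying tangent vectors from $T_C\Cor^+(n)\simeq\Hol(n)$ to $T_{\Theta(C)}\LT^1(n)\simeq\LT^0(n)$ with the linear isomorphism $d_C\Theta$.

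On $\LT^1(n)$ I would first recall that for the canonical Cartan-Schouten connection the geodesics through the identity are exactly the one-parameter subgroups $t\mapsto\exp(tV)$ with $V\in\LT^0(n)$, that the connection exponential at the identity is the matrix exponential, and that left translations are affine maps. Applying $L_\Gamma$ to the geodesic with initial velocity $\Gamma^{-1}\xi$ shows that the geodesic from $\Gamma$ with velocity $\xi\in T_\Gamma\LT^1(n)$ is $t\mapsto\Gamma\exp(t\,\Gamma^{-1}\xi)$, hence $\Exp_\Gamma(\xi)=\Gamma\exp(\Gamma^{-1}\xi)$. Inverting the relation $\exp(\Gamma^{-1}\xi)=\Gamma^{-1}\Gamma'$ yields $\Log_\Gamma(\Gamma')=\Gamma\log(\Gamma^{-1}\Gamma')$ and the geodesic $t\mapsto\Gamma(\Gamma^{-1}\Gamma')^t$. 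Since $\LT^0(n)$ is nilpotent, $\exp$ and $\log$ are the finite polynomial maps written in the excerpt and $\exp:\LT^0(n)\lto\LT^1(n)$ is a global diffeomorphism, so all these expressions are defined for every $\Gamma,\Gamma'$ and every $t\in\R$ with no completeness issue. Substituting $\Gamma=\Theta(C)$, $\Gamma'=\Theta(C')$, $\xi=t\,d_C\Theta(X)$, applying $\Theta^{-1}$, and using $(d_C\Theta)^{-1}$ to bring the logarithm back to $\Hol(n)$, reproduces the displayed formulas for $\Exp_C$, $\Log_C$ and $\gamma_{C\to C'}$.

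For the group mean, the defining condition of the group mean (exponential barycenter) of the canonical Cartan-Schouten connection is $\sum_{i=1}^k\Log_{\bar C}(C_i)=0$. Transporting by $\Theta$ and using the logarithm above, $\Log_{\bar C}(C_i)$ corresponds to the vector $\Theta(\bar C)\log(\Theta(\bar C)^{-1}\Theta(C_i))$ at $\Theta(\bar C)$; since $\Theta(\bar C)$ is invertible, the barycenter equation is equivalent to $\sum_{i=1}^k\log(\Theta(\bar C)^{-1}\Theta(C_i))=0$, which is the stated characterization.

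The main obstacle is existence and uniqueness of the solution $\bar C$, since on a general Lie group the group mean need not be unique. Here I would use the nilpotency of $\LT^0(n)$ directly. Grade $\LT^0(n)=\bigoplus_{m\gs 1}\mf{g}_m$ by the sub-diagonals, so that $[\mf{g}_a,\mf{g}_b]\subset\mf{g}_{a+b}$, and set $\beta=\log\Theta(\bar C)$, $\gamma_i=\log\Theta(C_i)$. By the Baker-Campbell-Hausdorff formula (a finite sum), $\log(\Theta(\bar C)^{-1}\Theta(C_i))=-\beta+\gamma_i+\tfrac12[-\beta,\gamma_i]+\cdots$, and on degree $m$ the only occurrence of the degree-$m$ part $\beta^{(m)}$ is the linear term, every bracket correction involving strictly lower-degree components of $\beta$. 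The barycenter equation therefore splits into a triangular system: on degree $1$ it reads $k\,\beta^{(1)}=\sum_i\gamma_i^{(1)}$, and each higher degree $m$ determines $\beta^{(m)}$ uniquely from the already-computed lower degrees through a linear equation with invertible scalar coefficient $-k$. This recursion in increasing degree yields both existence and uniqueness of $\beta$, hence of $\bar C=\Theta^{-1}(\exp\beta)$; alternatively one may invoke directly the uniqueness of the mean on simply connected nilpotent Lie groups \cite{Buser81}.
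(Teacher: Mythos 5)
Your proposal is correct, and for the exponential, logarithm and geodesic formulas it follows exactly the paper's route: transport everything through the diffeomorphism $\Theta$ and read off the canonical Cartan--Schouten operations on the matrix group $\LT^1(n)$, namely $\Exp_\Gamma(\xi)=\Gamma\exp(\Gamma^{-1}\xi)$, $\Log_\Gamma(\Gamma')=\Gamma\log(\Gamma^{-1}\Gamma')$ and $\gamma_{\Gamma\to\Gamma'}(t)=\Gamma(\Gamma^{-1}\Gamma')^t$; the reduction of the barycenter equation to $\sum_{i=1}^k\log(\Theta(\bar C)^{-1}\Theta(C_i))=0$ via invertibility of $\Theta(\bar C)$ is also the paper's computation. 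The genuine difference lies in the existence and uniqueness of the group mean: the paper disposes of it in one line by citing \cite[Example 8.1.8]{Buser81} for nilpotent Lie algebras, whereas you prove it directly. Your argument --- grade $\LT^0(n)=\bigoplus_{m\gs 1}\mf{g}_m$ by sub-diagonals so that $[\mf{g}_a,\mf{g}_b]\subset\mf{g}_{a+b}$, expand $\log(\exp(-\beta)\exp(\gamma_i))$ by the finite Baker--Campbell--Hausdorff series, and project the barycenter equation onto each degree --- is sound: every bracket contribution of total degree $m$ involves at least two factors, each of degree at least $1$, so only $\beta^{(1)},\dots,\beta^{(m-1)}$ enter the correction terms, and the degree-$m$ equation $-k\,\beta^{(m)}+\sum_i\gamma_i^{(m)}+(\text{lower-degree brackets})=0$ determines $\beta^{(m)}$ uniquely; the resulting triangular recursion gives both existence and uniqueness of $\beta$, hence of $\bar C=\Theta^{-1}(\exp\beta)$. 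This buys a self-contained proof (and explicitly yields existence, which the paper's citation covers only implicitly) at the cost of invoking BCH; the paper's version is shorter but delegates the key fact to the reference.
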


\begin{proof}
The exponential map, logarithm map and geodesics are pullbacks by $\Theta$ of corresponding operations in $\LT^1(n)$, which are for all $\Gamma,\Gamma'\in\LT^+(n)$, $\xi\in T_\Gamma\LT^+(n)\simeq\LT(n)$ and $t\in\R$:
\begin{enumerate}[label=$\cdot$]
    \itemsep0em
    \item (Exponential map) $\Exp_\Gamma(t\xi)=\Gamma\exp(t\,\Gamma^{-1}\xi)$,
    \item (Logarithm map) $\Log_\Gamma(\Gamma')=\Gamma\log(\Gamma^{-1}\Gamma')$,
    \item (Geodesic) $\gamma_{\Gamma\to \Gamma'}(t)=\Exp_\Gamma(t\,\Gamma^{-1}\Log_\Gamma(\Gamma'))=\Gamma(\Gamma^{-1}\Gamma')^t$.
\end{enumerate}
Since the Lie algebra $\LT^0(n)$ is nilpotent, the group mean $\bar{\Gamma}$ of the finite sample $\Gamma_1,...,\Gamma_k\in\LT^1(n)$ is unique \cite[Example 8.1.8]{Buser81}. It is characterized by $0=\sum_{i=1}^k\Log_{\bar{\Gamma}}(\Gamma_i)=\bar{\Gamma}\sum_{i=1}^k\log(\bar{\Gamma}^{-1}\Gamma_i)$, which is equivalent to $\sum_{i=1}^k\log(\bar{\Gamma}^{-1}\Gamma_i)=0$.
\end{proof}

\subsection{Explicit geodesics in dimension 2}

In dimension 2, the elliptope is reduced to one parameter. All full-rank correlation matrices write $C=C(\rho)=\begin{pmatrix}1 & \rho\\\rho & 1\end{pmatrix}$ with $\rho\in(-1,1)$. Therefore, the quotient-affine metric and the metrics defined in Section 4 only depend on one scaling parameter. They actually split in two groups and the geodesics can be computed in closed forms. The two formulae in the following result provide two different interpolations of the correlation coefficient. The proof is in the supplementary material.

\begin{theorem}[Geodesics in dimension 2]\label{thm:dimension2}
Let $C_1=C(\rho_1),C_2=C(\rho_2)\in\Cor^+(2)$ with $\rho_1,\rho_2\in(-1,1)$.
\begin{enumerate}
    \item Quotient-affine metrics and poly-hyperbolic-Cholesky metrics coincide (up to a scaling factor). The geodesic between $C_1$ and $C_2$ is $C(\rho(t))$ for $t\in\R$ where:
    \begin{equation}
        \rho(t)=\frac{\rho_1\cosh(\lambda t)+\sinh(\lambda t)}{\rho_1\sinh(\lambda t)+\cosh(\lambda t)},
    \end{equation}
    where $\lambda=\log\sqrt\frac{1+\rho_2}{1-\rho_2}-\log\sqrt\frac{1+\rho_1}{1-\rho_1}$ is known as the difference of the Fisher transformation of the correlation coefficients $\rho_1$ and $\rho_2$.
    \item Euclidean-Cholesky and log-Euclidean-Cholesky metrics coincide. The geodesic between $C_1$ and $C_2$ is $C(\rho(t))$ for $t\in\R$ where:
    \begin{equation}
        \rho(t)=\frac{F(t)}{\sqrt{1+F(t)^2}},
    \end{equation}
    where $F(t)=(1-t)\frac{\rho_1}{\sqrt{1-\rho_1^2}}+t\frac{\rho_2}{\sqrt{1-\rho_2^2}}$. This geodesic also coincides with the Lie-Cholesky group geodesic of Section 4.4.
\end{enumerate}
\end{theorem}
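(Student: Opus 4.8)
The plan is to reduce the whole two-dimensional problem to a single Cholesky coordinate and to treat the two assertions separately. First I would record the relevant maps explicitly: the Cholesky factor of $C(\rho)$ is $L=\Chol(C(\rho))=\begin{pmatrix}1&0\\\rho&\sqrt{1-\rho^2}\end{pmatrix}$, so $\Theta(C(\rho))=\begin{pmatrix}1&0\\\gamma&1\end{pmatrix}$ with $\gamma=\rho/\sqrt{1-\rho^2}$, while the single hyperbolic coordinate coming from $\Psi\circ\Chol$ is $s=\log\sqrt{\tfrac{1+\rho}{1-\rho}}$, since the second row $(\rho,\sqrt{1-\rho^2})$ of $L$ maps under $\varphi^{\Sph\Hyp}$ to $(\sinh s,\cosh s)\in\Hyp^1$. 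These two changes of coordinate $\rho\mapsto\gamma$ and $\rho\mapsto s$ are the only ingredients needed; everything else is a one-dimensional computation.

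For the second assertion I would first treat the Euclidean-Cholesky metric: since $\Theta$ carries $C(\rho)$ to the abscissa $\gamma$ of $\LT^1(2)\cong\R$ and the metric is the pullback of an inner product, its geodesic is the affine interpolation $\gamma(t)=(1-t)\gamma_1+t\gamma_2=F(t)$; inverting $\gamma=\rho/\sqrt{1-\rho^2}$ gives $\rho=\gamma/\sqrt{1+\gamma^2}$, hence $\rho(t)=F(t)/\sqrt{1+F(t)^2}$. The coincidence of the log-Euclidean-Cholesky and Lie-Cholesky group geodesics with this one follows from the nilpotency of $\LT^0(2)$: writing $N=\begin{pmatrix}0&0\\1&0\end{pmatrix}$ one has $N^2=0$, so $\exp(\gamma N)=I_2+\gamma N$ and $\log(I_2+\gamma N)=\gamma N$. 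Thus $\log\circ\,\Theta$ differs from $\Theta$ only by the constant shift $I_2$ and produces the same affine interpolation in $\gamma$, while the group law $(I_2+\gamma N)(I_2+\gamma'N)=I_2+(\gamma+\gamma')N$ shows $\LT^1(2)$ is abelian, so the group geodesic $\Gamma(\Gamma^{-1}\Gamma')^t$ again equals $I_2+F(t)N$.

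For the first assertion I would identify the poly-hyperbolic-Cholesky geometry in dimension $2$ with $\Hyp^1$: after removing the trivial factor $\mathrm{H}\Sph^0=\{1\}$ the product reduces to $\mathrm{H}\Sph^1\cong\Hyp^1$, and in the arc-length coordinate $s$ the geodesic is the affine interpolation $s(t)=s_1+\lambda t$ with $\lambda=s_2-s_1$ the difference of Fisher transforms. Converting back through $\rho=\tanh s$ and using the addition formula
\[
\tanh(s_1+\lambda t)=\frac{\tanh s_1+\tanh(\lambda t)}{1+\tanh s_1\tanh(\lambda t)}=\frac{\rho_1\cosh(\lambda t)+\sinh(\lambda t)}{\rho_1\sinh(\lambda t)+\cosh(\lambda t)}
\]
yields the stated formula. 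It then remains to show that the quotient-affine metric agrees with this one up to a constant. Since both live on a one-dimensional manifold it suffices to compare the two metric coefficients in the coordinate $\rho$: pulling back $ds^2$ gives the poly-hyperbolic-Cholesky coefficient $(1-\rho^2)^{-2}$, while feeding $X=\dot\rho\,E$ with $E=\begin{pmatrix}0&1\\1&0\end{pmatrix}$ into the explicit formula for $g^\QA$ gives $2\dot\rho^2(1-\rho^2)^{-2}$, a constant factor $2$ larger; metrics equal up to a constant scale share the same Levi-Civita connection and hence the same geodesics.

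The main obstacle is the explicit evaluation of the quotient-affine coefficient: one must compute $C^{-1}X$, the diagonal correction involving $(I_2+C\bullet C^{-1})^{-1}$, and verify that $\mathds{1}^\top(I_2+C\bullet C^{-1})^{-1}\mathds{1}$ simplifies so that the two pieces $2\dot\rho^2(1+\rho^2)(1-\rho^2)^{-2}$ and $2\dot\rho^2\rho^2(1-\rho^2)^{-2}$ combine into $2\dot\rho^2(1-\rho^2)^{-2}$. Everything else is bookkeeping in the single variable $\rho$; the conceptual content is simply that the Cholesky coordinate $\gamma$ linearizes the three flat/abelian structures, whereas the Fisher coordinate $s$ linearizes the two hyperbolic ones.
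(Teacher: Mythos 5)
Your proof is correct, and for the second assertion it is essentially the paper's own argument: nilpotency of $\LT^0(2)$ makes $\exp$ and $\log$ affine shifts, so the Euclidean-Cholesky, log-Euclidean-Cholesky and group structures all produce the straight-line interpolation in the coordinate $\gamma=\rho/\sqrt{1-\rho^2}$, which inverts to the stated $F(t)/\sqrt{1+F(t)^2}$. For the first assertion, however, your route genuinely differs from the paper's in the direction of inference. The paper takes the quotient-affine geodesic formula in dimension $2$ as \emph{known} (it comes from the earlier reference on quotient-affine metrics), computes $g^{\mathrm{QA}}_C(X,X)=2x^2(1-\rho^2)^{-2}$ and $g^{\mathrm{CPHC}}_C(X,X)=x^2(1-\rho^2)^{-2}$, and transfers the known formula from QA to PHC via the resulting proportionality. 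You instead derive the formula intrinsically on the PHC side: the single factor $\mathrm{H}\Sph^1\cong\Hyp^1$ is isometric to $\R$ in the Fisher/arc-length coordinate $s=\mathrm{artanh}(\rho)$ (your identification of the second Cholesky row with $(\sinh s,\cosh s)$ is exact), so the geodesic is affine in $s$, and the $\tanh$ addition formula yields precisely the stated expression; the same proportionality computation then transfers the formula to the quotient-affine metric. Your version buys self-containedness — it does not lean on the external reference for the quotient-affine geodesic — at the cost of carrying out the hyperbolic coordinate change explicitly, while both versions require the identical key computation, namely the two metric coefficients in the coordinate $\rho$ (your stated pieces $2\dot\rho^2(1+\rho^2)(1-\rho^2)^{-2}$ and $2\dot\rho^2\rho^2(1-\rho^2)^{-2}$ agree with the paper's values, so the sketched bookkeeping indeed closes). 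One point worth making explicit in a final write-up: the proportionality constant $2$ only guarantees equal geodesics because rescaling a metric by a constant leaves the Levi-Civita connection unchanged — you say this, and it is the same tacit step the paper relies on.
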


\section{Conclusion}

In this work, we proposed new Riemannian metrics on the open elliptope of full-rank correlation matrices that are not invariant under permutations. To the best of our knowledge, all the existing geometric structures were invariant under permutations. Thus the geometries we propose significantly departs from the classical structures. This can be a good assumption in some applications and an irrelevant characteristic in some others. We generalized the recently introduced quotient-affine metrics by studying the congruence action of several matrix Lie groups on SPD matrices. We showed that the family of $\LT^+(n)$-invariant metrics is a natural non-permutation-invariant generalization of affine-invariant metrics. Moreover, they are pullbacks of left-invariant metrics on the Lie group $\LT^+(n)$ by the Cholesky map. They are invariant under the congruence action of positive diagonal matrices so they descend to the elliptope. We explained that the main Riemannian operations can be computed numerically for these quotient-Lie-Cholesky metrics. However, we also showed that the curvature of quotient-affine metrics is unbounded and we can conjecture that the situation is not better for quotient-Lie-Cholesky metrics. In addition, the Riemannian operations are not computable in closed form a priori.

That is why we introduced new Riemannian metrics on the elliptope in a different way. We kept the Cholesky map which seems to be a good alternative to the invariance under permutations since the space of lower triangular matrices is not stable by permutations. Thus we defined the poly-hyperbolic-Cholesky (PHC) metrics which provide non-positively curved Riemannian symmetric space structures. We also defined two kinds of vector space structures that are flat, geodesically complete and for which all operations are known in closed form. Thus, these three families of metrics provide a Hadamard structure, in particular the Riemannian logarithm and the Fréchet mean are unique. We also put forward a nilpotent Lie group structure for which the group mean is unique. Finally, we proved that in dimension 2, the PHC geodesics are the quotient-affine geodesics and the geodesics of the three last structures coincide. This provides a new interpolation of the correlation coefficient.

It would be nice to test these new metrics on different kinds of data in future works. Moreover, all metrics on correlation matrices provide new product metrics on covariance matrices by decoupling the scales of the variables and the correlations between them. This approach seems promising since in many problems, the correlation gives more information than the covariance on the strength of the relations between the variables, although the scales can remain interesting. Thus, one question could be to adjust the weights between the two components and also between the scales of the variables. The possibilities are multiplied now we have many metrics on correlation matrices. Another direction of research is to investigate permutation-invariant Riemannian metrics on correlation matrices with a simpler geometry than the one of the quotient-affine metrics, for example Hadamard or even flat.

\section*{Acknowledgments}
This project has received funding from the European Research Council (ERC) under the European Union’s Horizon 2020 research and innovation program (grant G-Statistics agreement No 786854). This work has been supported by the French government, through the UCAJEDI Investments in the Future project managed by the National Research Agency (ANR) with the reference number ANR-15-IDEX-01 and through the 3IA Côte d’Azur Investments in the Future project managed by the National Research Agency (ANR) with the reference number ANR-19-P3IA-0002. The authors warmly thank Nicolas Guigui for insightful discussions on lower triangular matrices.

\begin{appendices}

\section{Proof appendix of Theorem 2.1}

Let $X=\mathds{11}^\top-I_n$ and $Y=\mu\mathds{1}^\top+\mathds{1}\mu^\top-2\,\diag(\mu)$ with $\somme(\mu)=\mathds{1}^\top\mu=0$ where $\mu\in\R^n$. Let $C=(1-\rho)I_n+\rho\mathds{11}^\top\in\Cor^+(n)$ for $\rho\in(-\frac{1}{n-1},1)$. Let us show that $\kappa_C(X,Y)$ tends to $+\infty$ when $\rho\to-\frac{1}{n-1}$, which proves that the curvature is not bounded from above.

The symmetric matric $\mathds{11}^\top$ has two eigenvalues: $0$ with multiplicity $n-1$ and $n$ with multiplicity $1$. Since $(\mathds{11}^\top)^2=n\mathds{11}^\top$, the minimal polynomial is $\mathrm{P}_{\mathds{11}^\top}(x)=x(x-n)$ for $x\in\R$. For all $\alpha,\beta\in\R$, the symmetric matrix $\Sigma=\alpha I_n+\beta\mathds{11}^\top$ has minimal polynomial $\mathrm{P}_\Sigma(x)=(x-\alpha)(x-(\alpha+n\beta))$ for $x\in\R$, which is of degree 2. Hence $\Sigma$ is positive definite if and only if $\alpha>0$ and $\alpha+n\beta>0$. In this case, its inverse is a polynomial in $\mathds{11}^\top$ of degree 1. More precisely, $\Sigma^{-1}=\alpha' I_n+\beta'\mathds{11}^\top$ with $\alpha'=\frac{1}{\alpha}$ and $\alpha'+n\beta'=\frac{1}{\alpha+n\beta}$, i.e. $\beta'=-\frac{\beta}{\alpha(\alpha+n\beta)}$. Note that $\alpha\beta'+\beta\alpha'+n\beta\beta'=0$.

Moreover, for all $i\ne j\in\{1,...,n\}$, $[\Sigma\bullet\Sigma^{-1}]_{ii}=(\alpha+\beta)(\alpha'+\beta')$ and $[\Sigma\bullet\Sigma^{-1}]_{ij}=\beta\beta'$. Therefore, $I_n+\Sigma\bullet\Sigma^{-1}=AI_n+B\mathds{11}^\top$ with $A=1+(\alpha+\beta)(\alpha'+\beta')-\beta\beta'=2+\alpha\beta'+\beta\alpha'=\frac{2\alpha(\alpha+n\beta)+n\beta^2}{\alpha(\alpha+n\beta)}$ and $B=\beta\beta'$. Note that $A+nB=2$. And $(I_n+\Sigma\bullet\Sigma^{-1})^{-1}=A'I_n+B'\mathds{11}^\top$ with $A'=\frac{1}{A}$ and $B'=-\frac{B}{2A}$.

We compute $\kappa_C(X,Y)$ where $C=\alpha I_n+\beta\mathds{11}^\top\in\Cor^+(n)$ with $\alpha+\beta=1$.
\small
\begin{align*}
	C^{-1}X&=(\alpha'I_n+\beta'\mathds{11}^\top)(\mathds{11}^\top-I_n)\\
	&=-\alpha'I_n+(\alpha'+(n-1)\beta')\mathds{11}^\top
\end{align*}
\begin{align*}
    &=-\frac{1}{\alpha}I_n+\frac{1}{\alpha(\alpha+n\beta)}\mathds{11}^\top,\\
	\Diag(C^{-1}X)&=(n-1)\beta'I_n,\\
	(I_n+C\bullet C^{-1})^{-1}\Diag(C^{-1}X)\mathds{1}&=(n-1)\beta'(A'I_n+B'\mathds{11}^\top)\mathds{1}\\
	&=\frac{n-1}{2}\beta'\mathds{1},\\
	X^\#&=X-(n-1)\beta'C,\\
	\Diag(X^\#)&=-(n-1)\beta'I_n,\\
	D(X,Y)&=\Diag(C^{-1}\Diag(X^\#)Y^\#-C^{-1}Y^\#C^{-1}\Diag(X^\#)C)\nonumber\\
	&=0,\\
	C^{-1}Y&=(\alpha'I_n+\beta'\mathds{11}^\top)(\mu\mathds{1}^\top+\mathds{1}\mu^\top-2\diag(\mu))\\
	&=\alpha'\mu\mathds{1}^\top+(\alpha'+(n-2)\beta')\mathds{1}\mu^\top-2\alpha'\diag(\mu),\\
	\Diag(C^{-1}Y)&=(n-2)\beta'\diag(\mu),\\
	(I_n+C\bullet C^{-1})^{-1}\Diag(C^{-1}Y)\mathds{1}&=(n-2)\beta'(A'I_n+B'\mathds{11}^\top)\mu\\
	&=(n-2)\beta'A'\mu,\\
	Y^\#&=Y-(n-2)\beta'A'(\diag(\mu)C+C\diag(\mu)),\\
    \Diag(Y^\#)&=-2(n-2)\beta'A'\diag(\mu),\\
    C^{-1}\Diag(Y^\#)X^\#&=-2(n-2)\beta'A'(\alpha'I_n+\beta'\mathds{11}^\top)\diag(\mu)(\mathds{11}^\top-I_n)\\
	&=-2(n-2)\beta'A'(-\alpha'\diag(\mu)+\alpha'\mu\mathds{1}^\top-\beta'\mathds{1}\mu^\top),
\end{align*}
\begin{align*}
    &C^{-1}X^\#C^{-1}\Diag(Y^\#)C\\
    &=-2(n-2)\beta'A'(-\alpha'I_n+(\alpha'+(n-1)\beta')\mathds{11}^\top)(\alpha'I_n+\beta'\mathds{11}^\top)\diag(\mu)(\alpha I_n+\beta\mathds{11}^\top) \\
    &=-2(n-2)\beta'A'(-\alpha'I_n+(\alpha'+(n-1)\beta')\mathds{11}^\top)(\diag(\mu)+\alpha'\beta\mu\mathds{1}^\top+\alpha\beta'\mathds{1}\mu^\top) \\
    &=-2(n-2)\beta'A'[-\alpha'(\diag(\mu)+\alpha'\beta\mu\mathds{1}^\top+\alpha\beta'\mathds{1}\mu^\top)+(\alpha'+(n-1)\beta')(1+n\alpha\beta')\mathds{1}\mu^\top],\\
    ~\\
    &C^{-1}\Diag(Y^\#)X^\#-C^{-1}X^\#C^{-1}\Diag(Y^\#)C \nonumber\\
    &=-2(n-2)\beta'A'[\alpha'(1+\alpha'\beta)\mu\mathds{1}^\top-(\alpha'+(n-1)\beta')(1+n\alpha\beta')\mathds{1}\mu^\top],
\end{align*}
\begin{align}
    D(Y,X)&=-2(n-2)\beta'A'\left(\frac{1}{\alpha^2}-\frac{1+(n-1)\alpha\beta'}{\alpha+n\beta}\right)\diag(\mu) \label{eq:alpha_plus_beta}\\
    &=-2(n-2)\beta'A'\left(\frac{1}{\alpha^2}-\frac{1}{(\alpha+n\beta)^2}\right)\diag(\mu) \nonumber\\
    &=2n(n-2)\frac{\beta^2}{2\alpha(\alpha+n\beta)+n\beta^2}\frac{2\alpha+n\beta}{\alpha^2(\alpha+n\beta)^2}\diag(\mu),\nonumber
\end{align}
\begin{align}
    \mathds{1}^\top D(I_n+C\bullet C^{-1})^{-1}D\mathds{1}&=A'\left(2n(n-2)\frac{\beta^2}{2\alpha(\alpha+n\beta)+n\beta^2}\frac{2\alpha+n\beta}{\alpha^2(\alpha+n\beta)^2}\right)^2\|\mu\|^2 \nonumber\\
    &=4n^2(n-2)^2\frac{\beta^4(2\alpha+n\beta)^2}{[\alpha(\alpha+n\beta)(2\alpha(\alpha+n\beta)+n\beta^2)]^3}\|\mu\|^2, \nonumber
\end{align}
\normalsize
where we used $\alpha+\beta=1$ from Equation (\ref{eq:alpha_plus_beta}).

Now, we compute $g^\QA_C(X,X)$, $g^\QA_C(Y,Y)$ and $g^\QA_C(X,Y)$.
\small
\begin{align*}
    g_C(X,X)&=\tr((C^{-1}X)^2)-2\mathds{1}^\top\Diag(C^{-1}X)(I_n+C\bullet C^{-1})^{-1}\Diag(C^{-1}X)\mathds{1}\\
    &=\tr\left((-\alpha'I_n+(\alpha'+(n-1)\beta')\mathds{11}^\top)^2\right)-2(n-1)^2{\beta'}^2\somme(A'I_n+B'\mathds{11}^\top)\\
    &=\tr({\alpha'}^2I_n+(n(\alpha'+(n-1)\beta')^2-2\alpha'(\alpha'+(n-1)\beta'))\mathds{11}^\top)\\
    &\quad-2(n-1)^2{\beta'}^2n(A'+nB')\\
    &=n((n-1){\alpha'}^2+2(n-1)^2\alpha'\beta'+n(n-1)^2{\beta'}^2)-n(n-1)^2{\beta'}^2\\
    &=\frac{n(n-1)}{\alpha^2(\alpha+n\beta)^2}((\alpha+n\beta)^2-2(n-1)\beta(\alpha+n\beta)+(n-1)^2\beta^2)\\
    &=\frac{n(n-1)}{\alpha^2(\alpha+n\beta)^2},\\
    g_C(Y,Y)&=\tr((C^{-1}Y)^2)-2\mathds{1}^\top\Diag(C^{-1}Y)(I_n+C\bullet C^{-1})^{-1}\Diag(C^{-1}Y)\mathds{1}\\
    &=\tr\left((\alpha'\mu\mathds{1}^\top+(\alpha'+(n-2)\beta')\mathds{1}\mu^\top-2\alpha'\diag(\mu))^2\right)\\
    &\quad-2(n-2)^2{\beta'}^2\mu^\top(A'I_n+B'\mathds{11}^\top)\mu\\
    &=\tr(4{\alpha'}^2\diag(\mu)^2+\alpha'(\alpha'+(n-2)\beta')(n\mu\mu^\top+\|\mu\|^2\mathds{11}^\top)) \nonumber\\
    &\quad -2\alpha'\tr((2\alpha'+(n-2)\beta')\mu\mu^\top+\alpha'(\mu\bullet\mu)\mathds{1}^\top+(\alpha'+(n-2)\beta')\mathds{1}(\mu\bullet\mu)^\top))\\
    &\quad-2(n-2)^2{\beta'}^2A'\|\mu\|^2
\end{align*}
\begin{align*}
    &=\|\mu\|^2(4{\alpha'}^2+2n\alpha'(\alpha'+(n-2)\beta')-4\alpha'(2\alpha'+(n-2)\beta')-2(n-2)^2{\beta'}^2A')\\
    &=\|\mu\|^2(2(n-2)\underset{\frac{1+\beta}{\alpha^2(\alpha+n\beta)}}{\underbrace{\alpha'(\alpha'+(n-2)\beta')}}-2(n-2)^2\underset{\frac{\beta^2}{\alpha(\alpha+n\beta)(2\alpha(\alpha+n\beta)+n\beta^2)}}{\underbrace{{\beta'}^2A'}})\\
    &=\frac{2(n-2)\|\mu\|^2}{\alpha^2(\alpha+n\beta)(2\alpha(\alpha+n\beta)+n\beta^2)}\underset{2\alpha(1+\beta)(\alpha+n\beta)+2\beta^2(\alpha+n\beta)=2(\alpha+n\beta)}{\underbrace{((1+\beta)(2\alpha(\alpha+n\beta)+n\beta^2)-(n-2)\alpha\beta^2)}}\\
    &=\frac{4(n-2)\|\mu\|^2}{\alpha^2(2\alpha(\alpha+n\beta)+n\beta^2)},
\end{align*}
\begin{align*}
    &g_C(X,Y)\\
    &=\tr(C^{-1}XC^{-1}Y)-2\mathds{1}^\top\Diag(C^{-1}X)(I_n+C\bullet C^{-1})^{-1}\Diag(C^{-1}Y)\mathds{1}\\
    &=\tr((-\alpha'I_n+(\alpha'+(n-1)\beta')\mathds{11}^\top)(\alpha'\mu\mathds{1}^\top+(\alpha'+(n-2)\beta')\mathds{1}\mu^\top-2\alpha'\diag(\mu)))\\
    &\quad -2(n-1)(n-2){\beta'}^2\mathds{1}^\top(A'I_n+\beta'\mathds{11}^\top)\mu\\
    &=\mathrm{constant}\times\somme(\mu)=0.
\end{align*}
\normalsize
Finally:
\begin{equation*}
    \frac{3}{8}\frac{\mathds{1}^\top D(I_n+C\bullet C^{-1})^{-1}D\mathds{1}}{g_C(X,X)g_C(Y,Y)-g_C(X,Y)^2}=\frac{3n(n-2)}{8(n-1)}\frac{\alpha\beta^4}{\alpha+n\beta}\left(\frac{2\alpha+n\beta}{2\alpha(\alpha+n\beta)+n\beta^2}\right)^2.
\end{equation*}

When $\beta\to-\frac{1}{n-1}$ (and $\alpha=1-\beta\to\frac{n}{n-1}$), we have $\alpha+n\beta\to0$ and we have $\alpha\beta^4\left(\frac{2\alpha+n\beta}{2\alpha(\alpha+n\beta)+n\beta^2}\right)^2\to\frac{n}{(n-1)^3}$ so this quantity tends to $+\infty$. Finally with $C=(1-\rho)I_n+\rho\mathds{11}^\top\in\Cor^+(n)$ with $\rho\in(-\frac{1}{n-1};1)$, $X=I_n-\mathds{11}^\top\in T_C\Cor^+(n)$ and $Y=\mu\mathds{1}^\top+\mathds{1}\mu^\top-2\diag(\mu)\in T_C\Cor^+(n)$, we have:
\begin{equation}
    \kappa_C(X,Y)\gs-\frac{1}{2}+\frac{3}{8}\frac{\mathds{1}^\top D(I_n+C\bullet C^{-1})^{-1}D\mathds{1}}{g_C(X,X)g_C(Y,Y)-g_C(X,Y)^2}\underset{\rho\to-\frac{1}{n-1}}{\lto} +\infty.
\end{equation}
This proves that the quotient-affine sectional curvature is not bounded from above.

\section{Proof of Theorem 4.9}

\begin{enumerate}
    \item The formula of the geodesic is known for the quotient-affine metrics in dimension 2. Hence it suffices to show that the quotient-affine metrics and the poly-hyperbolic-Cholesky metrics coincide up to a scaling factor. Let $C=C(\rho)$ and $X=\begin{pmatrix}0 & x\\x & 0\end{pmatrix}\in T_C\Cor^+(2)$. We compute the quotient-affine metric $g^{\mathrm{QA}}_C(X,X)=\tr(C^{-1}XC^{-1}X)-2\,\somme(D(I_n+C\bullet C^{-1})^{-1}D)$ and the canonical PHC metric $g^{\mathrm{CPHC}}_C(X,X)=\|\Diag(L)^{-1}L\,\low_S(L^{-1}XL^{-\top})\|^2$ where $D=\Diag(C^{-1}X)$ and $L=\Chol(C)$.
    \begin{align*}
        C^{-1}&=\frac{1}{1-\rho^2}\begin{pmatrix}1 & -\rho\\-\rho & 1\end{pmatrix},\\
        C^{-1}X&=\frac{x}{1-\rho^2}\begin{pmatrix}-\rho & 1\\1 & -\rho\end{pmatrix},
    \end{align*}
    \begin{align*}
        C^{-1}XC^{-1}X&=\frac{x^2}{(1-\rho^2)^2}\begin{pmatrix}1+\rho^2 & -2\rho\\-2\rho & 1+\rho^2\end{pmatrix},\\
        \tr(C^{-1}XC^{-1}X)&=\frac{2(1+\rho^2)}{(1-\rho^2)^2}x^2,\\
        I_n+C\bullet C^{-1}&=\frac{1}{1-\rho^2}\begin{pmatrix}2-\rho^2 & -\rho^2\\-\rho^2 & 2-\rho^2\end{pmatrix},\\
        (I_n+C\bullet C^{-1})^{-1}&=\frac{1}{4}\begin{pmatrix}2-\rho^2 & \rho^2\\\rho^2 & 2-\rho^2\end{pmatrix},\\
        D=\Diag(C^{-1}X)&=-\frac{\rho x}{1-\rho^2}I_2,\\
        \somme(D(I_n+C\bullet C^{-1})^{-1}D)&=\frac{\rho^2x^2}{(1-\rho^2)^2},\\
        g_C(X,X)&=\frac{2x^2}{(1-\rho^2)^2},
    \end{align*}
    \begin{align*}
        L=\Chol(C)&=\begin{pmatrix}1 & 0\\\rho & \sqrt{1-\rho^2}\end{pmatrix},\\
        L^{-1}XL^{-\top}&=\begin{pmatrix}1 & 0\\-\frac{\rho}{\sqrt{1-\rho^2}} & \frac{1}{\sqrt{1-\rho^2}}\end{pmatrix}\begin{pmatrix}0 & x\\x & 0\end{pmatrix}\begin{pmatrix}1 & -\frac{\rho}{\sqrt{1-\rho^2}}\\0 & \frac{1}{\sqrt{1-\rho^2}}\end{pmatrix}\\
        &=\begin{pmatrix}0 & x\\ \frac{x}{\sqrt{1-\rho^2}} & -\frac{\rho x}{\sqrt{1-\rho^2}} \end{pmatrix}\begin{pmatrix}1 & -\frac{\rho}{\sqrt{1-\rho^2}}\\0 & \frac{1}{\sqrt{1-\rho^2}}\end{pmatrix}\\
        &=\begin{pmatrix}0 & \frac{x}{\sqrt{1-\rho^2}}\\\frac{x}{\sqrt{1-\rho^2}} & -\frac{2\rho x}{1-\rho^2}\end{pmatrix},
    \end{align*}
    \begin{align*}
        \Diag(L)^{-1}L\,\low_S(L^{-1}XL^{-\top})&=\begin{pmatrix}1 & 0\\\frac{\rho}{\sqrt{1-\rho^2}} & 1\end{pmatrix}\begin{pmatrix}0 & 0\\\frac{x}{\sqrt{1-\rho^2}} & -\frac{\rho x}{1-\rho^2}\end{pmatrix}\\
        &=\begin{pmatrix}0 & 0\\\frac{x}{\sqrt{1-\rho^2}} & -\frac{\rho x}{1-\rho^2}\end{pmatrix},\\
        g^{\mathrm{CPHC}}_C(X,X)&=\left(\frac{1}{1-\rho^2}+\frac{\rho^2}{(1-\rho^2)^2}\right)x^2\\
        &=\frac{x^2}{(1-\rho^2)^2}.
    \end{align*}
    
    \item Euclidean-Cholesky and log-Euclidean-Cholesky metrics coincide in dimension 2 because $\exp(\xi)=I_n+\xi$ and $\log(\Gamma)=\Gamma-I_2$ for $\xi\in\LT^0(2)$ and $\Gamma\in\LT^1(2)$. Thus their common Riemannian exponential and logarithm in $\LT^1(2)$ are simply $\Exp_\Gamma(\xi)=\Gamma+\xi$ and $\Log_\Gamma(\Gamma')=\Gamma'-\Gamma$. On the other hand, the group exponential in $\LT^1(2)$ is $\Exp^{\LT^1(2)}_\Gamma(\xi)=\Gamma\exp(\Gamma^{-1}\xi)=\Gamma(I_n+\Gamma^{-1}\xi)=\Gamma+\xi=\Exp_\Gamma(\xi)$. Hence, the group geodesics coincide with the (log-)Euclidean-Cholesky geodesics. Let us compute them.
    \begin{align*}
        \Gamma_1&:=\Diag(\Chol(C_1))^{-1}\Chol(C_1)=\begin{pmatrix}1 & 0\\\frac{\rho_1}{\sqrt{1-\rho_1^2}} & 1\end{pmatrix},\\
        \Gamma_2&:=\Diag(\Chol(C_2))^{-1}\Chol(C_2)=\begin{pmatrix}1 & 0\\\frac{\rho_2}{\sqrt{1-\rho_2^2}} & 1\end{pmatrix},\\
        C(t)&=\Theta^{-1}((1-t)\Gamma_1+t\Gamma_2)=\Theta^{-1}\begin{pmatrix}1 & 0\\F(t) & 1\end{pmatrix}\\
        &=\cor\begin{pmatrix}1 & F(t)\\F(t) & 1+F(t)^2\end{pmatrix}=\begin{pmatrix}1 & \frac{F(t)}{\sqrt{1+F(t)^2}}\\\frac{F(t)}{\sqrt{1+F(t)^2}} & 1\end{pmatrix}.
    \end{align*}
    We can also compute the (log-)Euclidean-Cholesky metric in dimension 2.
    \begin{align*}
        g_C(X,X)&=\|d_C\Theta(X)\|^2=\|(\Theta\circ C)'(\rho)\|x^2\\
        &=f'(\rho)^2x^2,
    \end{align*}
    where $f(\rho)=\frac{\rho}{\sqrt{1-\rho^2}}$. So $f'(\rho)=\frac{\sqrt{1-\rho^2}+\frac{\rho^2}{\sqrt{1-\rho^2}}}{1-\rho^2}=\frac{1}{(1-\rho^2)^{3/2}}$ and:
    \begin{equation*}
        g_C(X,X)=\frac{x^2}{(1-\rho^2)^3}.
    \end{equation*}
\end{enumerate}

\end{appendices}

\bibliographystyle{siamplain}
\bibliography{references}

\end{document}